\documentclass{amsart}
\usepackage[english]{babel}
\usepackage{amssymb,graphicx,enumerate,xcolor}
\usepackage{fancyhdr}
\usepackage{hyperref}
\usepackage[style=numeric,maxnames=10]{biblatex}
\usepackage{csquotes}
\bibliography{ref}
\renewbibmacro{in:}{}

\newcommand{\assign}{:=}
\newcommand{\backassign}{=:}
\newcommand{\cdummy}{\cdot}
\newcommand{\comma}{{,}}
\newcommand{\infixand}{\text{ and }}
\newcommand{\mathe}{\mathrm{e}}
\newcommand{\nobracket}{}
\newcommand{\nocomma}{}
\newcommand{\nosymbol}{}
\newcommand{\tmcolor}[2]{{\color{#1}{#2}}}
\newcommand{\tmem}[1]{{\em #1\/}}
\newcommand{\tmop}[1]{\ensuremath{\operatorname{#1}}}
\newcommand{\tmstrong}[1]{\textbf{#1}}
\newcommand{\tmtextit}[1]{\text{{\itshape{#1}}}}
\newenvironment{enumerateroman}{\begin{enumerate}[i.] }{\end{enumerate}}
\newcounter{nnacknowledgments}

{\theoremstyle{remark}\newtheorem{acknowledgments*}[nnacknowledgments]{Acknowledgments}}
\newtheorem{corollary}{Corollary}
\newtheorem{definition}{Definition}
\newtheorem{lemma}{Lemma}
\newtheorem{proposition}{Proposition}
{\theoremstyle{remark}\newtheorem{remark}{Remark}}
\newtheorem{theorem}{Theorem}

\begin{document}
	
	\pagestyle{fancy}
	\fancyhead{}
	\fancyhead[C]{A Constrained Mean Curvature Flow}
	\fancyhead[R]{\thepage}
        \setlength{\headheight}{12.0pt}

\title{A Constrained Mean Curvature Flow On Capillary Hypersurface Supported on totally geodesic plane}

\author{Xiaoxiang Chai}
\address{Department of Mathematics, POSTECH, Pohang, Gyeongbuk, South Korea}
\email{xxchai@kias.re.kr, xxchai@postech.ac.kr}

\author{Yimin Chen}
\address{Department of Mathematics, Pusan National University, Busan, South
Korea}
\email{sherlockpoe@pusan.ac.kr}

\begin{abstract}
  We prove a new Minkowski type formula for capillary hypersurfaces supported
  on totally geodesic hyperplanes in hyperbolic space. It leads to a
  volume-preserving flow starting from a star-shaped initial hypersurface. We
  prove the long-time existence of the flow and its uniform convergence to a
  $\theta$-totally umbilical cap. Additionally, we establish that a
  $\theta$-totally umbilical cap is an energy minimizer for a given enclosed volume.
\end{abstract}

{\maketitle}

\section{Introduction}

\

Mean curvature flow has a rich history, dating back to significant works such
as Huisken {\cite{huisken1984flow}}. Huisken showed that a convex and closed
hypersurface will flow to a sphere under the properly rescaled mean curvature
flow. A constrained curvature flow is a flow which preserves some geometric
quantities. In $\mathbb{R}^2$, Gage \ {\cite{gage1986area}} used a constrained
curve shortening flow to prove an isoperimetric inequality. In higher
dimensions, a constrained mean curvature flow was applied to prove the
isoperimetric inequality by Huisken {\cite{huisken1987volume}}. This
constrained flow preserves the enclosed volume while decreasing the area of
the hypersurface.

An alternative approach to create a flow that preserves the enclosed volume is
by employing the Minkowski formula on the hypersurface. This has been explored
in {\cite{guan2015mean}}, which investigated such flows in space forms.
Furthermore, this approach has been extended to warped product spaces in
{\cite{guan2019volume}}, where they considered the flow $x : (\mathbb{R}
\times N, g) \rightarrow (\bar{M}, \bar{g} = d r^2 + \phi^2 (r) g_N)$
satisfying
\[ \frac{\partial x}{\partial t} = (n \phi' - u H) \nu . \]
Under appropriate assumptions on the metric $\bar{g}$, the flow is expected to
converge to a level set of $\phi$. See for example {\cite{andrews2001volume}},
{\cite{andrews2018quermassintegral}}, {\cite{andrews2021volume}},
{\cite{hu2022geometric}} and {\cite{hu2022locally}} for various types of fully
nonlinear curvature flow and anisotropic curvature flow in different ambient
spaces.

In recent years, there has been considerable interest in geometric flows of
capillary hypersurfaces, for instance, inverse mean curvature flow with free
boundary in the Euclidean unit ball {\cite{lambert2016inverse}}.

Diving into the main topic of this paper, constrained curvature flow on
capillary hypersurfaces has yielded significant results. These include:
constrained inverse mean curvature type {\cite{scheuer2022alexandrov}},
{\cite{weng2022alexandrov}} and mean curvature type flow {\cite{hu2023mean}},
{\cite{wang2020mean}} for capillary hypersurfaces in the Euclidean unit ball;
curvature flows {\cite{hu2022complete}}, {\cite{mei2024constrained}} and
{\cite{wang2023alexandrov}} in capillary hypersurfaces in Euclidean half
space; \ mean curvature type flow {\cite{mei2023constrained}} in geodesic ball
in space forms.

In this paper, we consider a new constrained mean curvature type flow for
capillary hypersurfaces, which are supported on totally geodesic hyperplanes
in hyperbolic space $\mathbb{H}^{n + 1}$. We use the well-known
Poincar$\acute{\mathe}$ half space model $(\mathbb{H}^{n + 1}, \langle
\cdummy, \cdummy \rangle) = \left( \mathbb{R}^{n + 1}_+, \frac{1}{x^2_{n + 1}}
\langle \cdummy, \cdummy \rangle_{\delta} \right)$, where $x_{n + 1}$ is the
$(n + 1)$-th coordinate, $\langle \cdummy, \cdummy \rangle_{\delta}$ is the
Euclidean metric and $\mathbb{R}^{n + 1}_+ : = \{ x \in \mathbb{R}^{n + 1} :
x_{n + 1} > 0 \}$. Let $P : = \{ x \in \mathbb{H}^{n + 1} : x_1 = 0 \}$ be a
totally geodesic hyperplane. Denote by $x$ the position vector in
$\mathbb{R}^{n + 1}_+ $and $\{ E_i \}_{i = 1}^{n + 1}$ the coordinate basis of
$(\mathbb{R}^{n + 1}, \langle \cdummy, \cdummy \rangle_{\delta})$.

Throughout the paper, we consider $x_0 : M \rightarrow \mathbb{H}^{n + 1}$ as
an immersion of hypersurface in $\mathbb{H}^{n + 1}$. If $\Sigma = x (M)$
satisfies that
\begin{enumerateroman}
  \item $\tmop{int} \Sigma = x (\tmop{int} M) \subset P_+ \assign \{ x \in
  \mathbb{H}^{n + 1} : x_1 > 0 \}$,
  
  \item $\partial \Sigma = x (\partial M) \subset \{ x  \in \mathbb{H}^{n + 1}
  : x_1 = 0 \} = P$,
  
  \item $\Sigma$ and $P$ contacts at a constant angle $\theta$ on $\partial
  \Sigma = \Sigma \cap P$,
\end{enumerateroman}
we call $\Sigma$ a \tmtextit{$\theta$-capillary hypersurface} supported on
$P$, and $P$ the supporting hypersurface.

On a $\theta$-capillary hypersurface supported on totally geodesic hyperplane
$P$, we introduce a novel condition called {\tmem{star-shapedness with respect
to $c E_{n + 1}$}}.

\begin{definition}
  Let $x : M \rightarrow P_+ \subset \mathbb{H}^{n + 1}$ be a
  $\theta$-capillary hypersurface supported on $P$. We say $\Sigma$ is
  star-shaped with respect to $c E_{n + 1}$ if it satisfies that
  \[ \langle x - c E_{n + 1}, \nu \rangle > 0. \]
\end{definition}

We consider a flow, defined as a family of embeddings $x : M \times [0, T)
\rightarrow \mathbb{H}^{n + 1}$ with $x (\partial \Sigma, \cdummy) \subset P$,
such that
\begin{equation}
  \begin{array}{rclccl}
    (\partial_t x)^{\bot} & = & q_c \nu, &  &  & \tmop{in} M \times
    [0, T) ;\\
    \langle \nu, \bar{N} \circ x \rangle & = & - \cos \theta, &  &  &
    \tmop{on} \partial M \times [0, T) ;\\
    x (\cdummy, 0) & = & x_0 (\cdummy), &  &  & \tmop{on} M,
  \end{array} \label{flow}
\end{equation}
where the normal velocity $q_{c}$ is given by
\[ q_c = \frac{n c}{x_{n + 1}} - n c \cos \theta \langle E_1, \nu
   \rangle - H \langle x - c E_{n + 1}, \nu \rangle . \]

We denote by $\kappa$ the principal curvature of an umbilical hypersurface
$\mathcal{C}$. Umbilical hypersurfaces in hyperbolic space can be classified
into three types depending on $\kappa$ as depicted in the figure. In the case
of $\kappa = 0$, it is a totally geodesic hyperplane; in the case of $\kappa >
1$, it is a geodesic sphere, and for $0 < \kappa < 1$, it is an equidistant
hypersurface, and if $\kappa = 1$, it is a horosphere. In the Poincar{\'e}
half space model, $\mathcal{C}$ can be represented as a plane or sphere with
respect to the Euclidean metric. It is easy to see that compact umbilical
$\theta$-capillary hypersurface, can be part of a geodesic sphere, a
horosphere and an equidistant hypersurface.

\begin{figure}[ht]
\includegraphics[width=10cm]{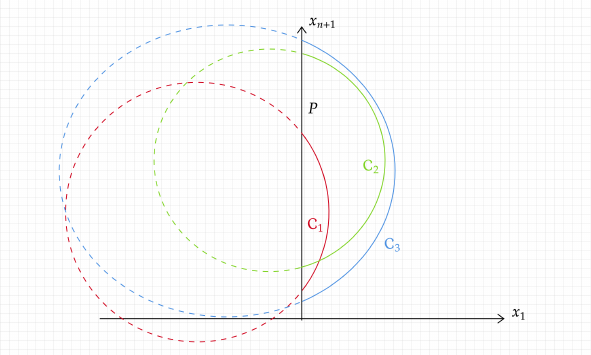}
 \caption{$\mathcal{C}_1$, equidistant hypersurface; $\mathcal{C}_2$, geodesic
 ball; $\mathcal{C}_3$, horosphere.}
\end{figure}

To state our main theorem, we need the following definition.

\begin{definition}
  We define the $\theta$-umbilical cap $\mathcal{C}_{c, R, \theta}$ as follows
  \begin{equation}
    \mathcal{C}_{c, R, \theta} (a) = \{ x \in \mathbb{H}^{n + 1} : |x - R \cos
    \theta E_1 - a - c E_{n + 1} |_{\delta} \leq R, x_1 \geq 0 \},
    \label{capbarrier}
  \end{equation}
  where $a$ is a constant vector perpendicular to both $E_1$ and $E_{n + 1}$. 
\end{definition}

For a $\theta$-umbilical cap, we define a constant $K_0 (c, R, \theta)$ by
\[ K_0 (c, R, \theta) = \left\{\begin{array}{llll}
     c - R \sin \theta &  &  & \theta \leq \pi / 2\\
     c - R &  &  & \theta > \pi / 2
   \end{array}\right. . \]
Note that $K_0 (c, R, \theta) > 0$ if and only if the $\mathcal{C}_{c, R,
\theta} (a)$ is compact. Combining with Remark \ref{capstatic}, we know that
its principal curvature $\kappa = \frac{c}{R} > \sin \theta$.

\

Now we are ready to state our main theorem.

\begin{theorem}
  \label{thm}Let $x_0 : M  \rightarrow P_+ \subset \mathbb{H}^{n + 1}$ be an
  embedding of a compact capillary hypersurface $\Sigma_0 = x_0 (M)$,
  supported on the totally geodesic plane $P$ with constant contact angle
  $\theta$. Suppose there exist constants c, R such that $K_0 (c, R, \theta) >
  c (n - 1) / 4 n$, and \ $\Sigma_0$ is contained in the cap $\mathcal{C}_{c,
  R, \theta} (a)$ and star-shaped with respect to $c E_{n + 1}$. We assume
  that in addition, $\theta$ satisfies that
  \begin{equation}
    | \cos \theta | < \frac{4 n K_0 (c, R, \theta) - c (n - 1)}{4 n K_0 (c, R,
    \theta) + c (n - 1)} . \label{anglecond}
  \end{equation}
  Then the flow \eqref{flow} exists globally with uniform
  $C^{\infty}$-estimates. Moreover, $x (\cdummy, t)$ uniformly converges to an
  umbilical cap in $C^{\infty}$ topology as $t \rightarrow \infty$, with the
  same volume of its enclosed domain as $\Sigma_0$. 
\end{theorem}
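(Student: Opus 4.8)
The plan is to follow the now-standard machinery for constrained curvature flows of capillary hypersurfaces, adapted to the hyperbolic half-space setting and the new vector field $cE_{n+1}$. I would first record short-time existence: since \eqref{flow} is, after writing $\Sigma_t$ as a graph over $\Sigma_0$ (or over a fixed reference cap), a quasilinear parabolic equation for the graph function with an oblique Neumann-type boundary condition $\langle\nu,\bar N\rangle=-\cos\theta$, standard parabolic theory on manifolds with boundary gives a solution on a maximal interval $[0,T)$. Then I would verify that the new Minkowski-type formula of the paper makes the enclosed volume constant along \eqref{flow} (the tangential part of $\partial_t x$ only reparametrizes) and that the capillary energy is monotone nonincreasing; this is what singles out $q_c$ and what will force convergence to a critical point, i.e. a $\theta$-totally umbilical cap.

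The heart of the argument is a priori estimates, carried out in the following order. \emph{Step 1 (barriers / $C^0$):} using that $\Sigma_0$ is contained in the cap $\mathcal C_{c,R,\theta}(a)$ and that such caps are static solutions (Remark, \texttt{capstatic}), the avoidance principle for the oblique boundary problem confines $\Sigma_t$ between an inner and outer cap for all time; the hypothesis $K_0(c,R,\theta)>0$ is exactly what keeps these barrier caps compact, and the quantitative hypotheses $K_0>c(n-1)/4n$ and \eqref{anglecond} are the thresholds under which the outer barrier argument and the reflection of the boundary term $-nc\cos\theta\langle E_1,\nu\rangle$ close up. \emph{Step 2 (star-shapedness / $C^1$):} I would show the defining function $u=\langle x-cE_{n+1},\nu\rangle$ (rescaled appropriately by $x_{n+1}$, since in the half-space model the natural quantities carry a conformal factor) satisfies a parabolic equation whose zeroth-order terms have a favorable sign under the angle condition \eqref{anglecond}, and that the boundary derivative of $u$ has the right sign because $P$ is totally geodesic; the maximum principle then preserves $u>0$, giving a uniform gradient bound once combined with the $C^0$ bound. \emph{Step 3 ($C^2$):} differentiating the flow and using the Codazzi and Gauss equations of $\mathbb H^{n+1}$, I would derive an evolution equation for the second fundamental form, bound $H$ from above and below (using the preserved bounds on $u$ and on $x_{n+1}$), and handle the boundary via the capillary Neumann condition together with the fact that the supporting $P$ is totally geodesic (so its second fundamental form contributes nothing and the boundary maximum-principle terms have a sign); this yields uniform $|A|\le C$.

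With uniform $C^2$ bounds and uniform parabolicity, Step 4 is routine bootstrapping: Krylov–Safonov / Schauder estimates for oblique boundary problems give uniform $C^\infty$ bounds, hence $T=\infty$ and long-time existence with uniform estimates. For convergence, I would use the monotonicity of the capillary energy together with the volume constraint: integrating the energy decay in time shows $\int_0^\infty\!\!\int_{\Sigma_t} q_c^2\,dA\,dt<\infty$, so along a subsequence $q_c\to 0$; the uniform higher-order estimates upgrade this to $q_c\to0$ in $C^\infty$, and by the rigidity in the Minkowski formula $q_c\equiv0$ forces $\Sigma_\infty$ to be a $\theta$-umbilical cap. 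Finally, to get full (not just subsequential) convergence one argues that the limit is unique among umbilical caps with the prescribed enclosed volume — the volume pins down $R$, the angle $\theta$ is fixed, and the barrier caps from Step 1 pin down the center translation $a$ and the height $c$ — and a standard interpolation/ODE argument (or a \L{}ojasiewicz-type inequality, though here the explicit structure should suffice) promotes subsequential to uniform convergence with exponential rate.

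I expect the main obstacle to be Step 3, the curvature bound, and specifically the interplay at the boundary: controlling the term $-nc\cos\theta\langle E_1,\nu\rangle$ in $q_c$ and the conformal factor $x_{n+1}$ in the evolution of $|A|$ is what forces the two quantitative hypotheses $K_0(c,R,\theta)>c(n-1)/4n$ and the sharp angle bound \eqref{anglecond}; getting the signs right in the boundary maximum principle for the largest principal curvature, using only that $P$ is totally geodesic, is the delicate point. The $C^0$ and $C^1$ steps, while technical because of the hyperbolic metric, should go through by the barrier and maximum-principle arguments sketched above.
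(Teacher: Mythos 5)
Your overall skeleton (static caps as barriers for $C^0$, a gradient estimate, bootstrapping to $C^\infty$, energy monotonicity plus the Minkowski formula, then uniqueness of the limit) matches the paper, but two of your central steps have genuine gaps. First, the $C^1$ estimate: preserving positivity of $\langle x-cE_{n+1},\nu\rangle$ by a maximum principle is not enough — star-shapedness is only used to write the flow as the scalar quasilinear problem \eqref{scalflow} for $u=\log\rho$ on $\bar{\mathbb S}^n_+$, and a gradient bound requires a \emph{quantitative} lower bound on the support function, equivalently an upper bound on $v=\sqrt{1+|\nabla u|^2}$, which positivity alone does not give. Moreover the delicate point is the capillary boundary condition $u_\beta=v\cos\theta$: the paper handles it with the auxiliary function $\Psi=(1+Kd)v+\cos\theta\,\sigma(\nabla u,\nabla d)$, whose boundary correction term is what lets Hopf's lemma close the argument, and it is precisely in this interior maximum-principle computation that the hypotheses $K_0(c,R,\theta)>c(n-1)/4n$ and \eqref{anglecond} are consumed — not in the $C^0$ barrier step nor in a curvature bound, as you suggest. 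Your Step 3 (evolution of $|A|$ with a boundary maximum principle) is both the hardest route and unnecessary: once $|\nabla u|$ is bounded, \eqref{scalflow} is uniformly parabolic and standard quasilinear theory with oblique boundary conditions gives all higher estimates, which is how the paper proceeds.

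Second, the convergence argument as you state it does not work. The energy decay is $\frac{d}{dt}\mathcal Q=\int_{\Sigma_t}Hq_c\,dA$, which is not $-\int q_c^2$ and does not obviously control $\int q_c^2$; the paper instead inserts the second Minkowski formula (\eqref{Mink1} with $k=2$) to get $\frac{d}{dt}\mathcal Q=\int(H_1^2-H_2)\langle x-cE_{n+1},\nu\rangle\,dA\le 0$, so the time-integrable quantity is the umbilicity deficit $\sum_{i<j}(\kappa_i-\kappa_j)^2$ weighted by the (positive) support function, and subsequential limits are umbilical caps with no need for the rigidity statement ``$q_c\equiv0$ implies umbilical cap,'' which you invoke but do not prove (it would amount to an Alexandrov/Heintze–Karcher-type theorem in this setting). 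Finally, your uniqueness step is too quick: in the half-space model the enclosed volume does \emph{not} pin down the Euclidean radius, since the principal curvature of a cap is $c/R$ and depends on the center height (Remark \ref{capstatic}); the paper needs Lemma \ref{capiso} (equal volume forces equal ratio $c/R$) together with a monotonicity argument for the maximal and minimal cap radii $R^\ast(t)$, $R_\ast(t)$ through points of $\Sigma_t$ to rule out oscillation between different limit caps — the barriers from the $C^0$ step only confine the flow, they do not identify the limiting center or radius.
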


\begin{remark}
  In {\cite{mei2023constrained}} and {\cite{wang2020mean}}, the angle
  condition $| \cos \theta | \leq \frac{3 n + 1}{5 n - 1}$ is required similar
  to our angle condition \eqref{anglecond}. By rearranging the terms, we find
  that the condition \eqref{anglecond} is equivalent to
  \[ | \cos \theta | < \frac{3 n + 1 - 4 n \sin \theta R / c}{5 n - 1 - 4 n
     \sin \theta R / c}, \quad \tmop{if} \hspace{1.2em} \theta \leq \pi / 2 ;
  \]
  and
  \[ | \cos \theta | < \frac{3 n + 1 - 4 n R / c}{5 n - 1 - 4 n R / c}, \quad
     \tmop{if} \hspace{1.2em} \theta > \pi / 2. \]
  Compared to the condition in {\cite{mei2023constrained}} and
  {\cite{wang2020mean}}, our condition is stricter due to $R / c$.
\end{remark}

The following remark is crucial, which shows that some specific umbilical caps
are static along the flow \eqref{flow}.

\begin{remark}
  \label{capstatic}For any $r > 0$, $q_{c, \theta}$ is identically zero on
  umbilical cap $\mathcal{C}_{c, r, \theta} (a)$. It is well-known that the
  mean curvature by conformality can be written as
  \begin{eqnarray*}
    H & = & e^{- w} (H_{\delta} + n \tilde{\nabla}_{\tilde{\nu}} w)\\
    & = & n x_{n + 1} \left( \frac{1}{r} + \tilde{\nabla}_{\tilde{\nu}} \log
    \frac{1}{x_{n + 1}} \right)\\
    & = & n x_{n + 1} \left( \frac{1}{r} - x^{- 1}_{n + 1} \langle \frac{x +
    r \cos \theta E_1 - c E_{n + 1} }{r}, E_{n + 1} \rangle_{\delta} \right) 
    \\
    & = & n \frac{c}{r},
  \end{eqnarray*}
  where $\tilde{\nu} = r^{- 1} (x + r \cos \theta E_1 - c E_{n + 1}) $ and we
  use the fact that the mean curvature of $\mathcal{C}_{c, r, \theta} (0)$
  with respect to the metric $\delta$ is $H_{\delta} = \frac{n}{r}$. Hence, we
  get
  \begin{eqnarray*}
    q_c& = & \frac{n c}{x_{n + 1}} - n \langle E_1, \nu \rangle c \cos \theta -
    H \langle x - c E_{n + 1}, \nu \rangle  \\
    &= & \tfrac{n c}{x_{n + 1}} - n c x_{n + 1}^{- 1} \bar{\nu}_1 \cos \theta -
    \tfrac{n c}{r} \langle - r \cos \theta E_1 + c E_{n + 1} + r \bar{\nu} - c
    E_{n + 1}, x_{n + 1} \bar{\nu} \rangle  \\
    &= & \tfrac{n}{x_{n + 1}} (- c \bar{\nu}_1 \cos \theta + c \tfrac{r \cos
    \theta}{r} \bar{\nu}_1)  \\
    &= & 0. 
  \end{eqnarray*}
  Therefore umbilical caps are static along the flow \eqref{flow}.
  
  Note that the principal curvature of an umbilical cap depends not only on
  the radius but also on the last coordinate of its center (in the Euclidean
  metric sense).
\end{remark}

The paper is organized as follows:

In Section \ref{prelim}, we introduce basic notations and definitions of
hypersurfaces. In Section \ref{sec:minkowski}, we prove a new Minkowski
formula on capillary hypersurface supported on a totally geodesic hyperplane,
comparing to the one in {\cite{chen2022some}}. In Sections \ref{sec:scalar},
\ref{sec:c0} and \ref{sec:c1}, we follow the method in {\cite{wang2020mean}}
and {\cite{mei2024constrained}} to study the scalar equation of the flow
\eqref{flow}, in particular, we prove the $C^0$ and $C^1$ estimates. In
Section \ref{sec:convergence}, we prove the uniform convergence of the flow.

\

\begin{acknowledgments*}
  We would like to thank Juncheol Pyo for helpful comments and hospitality. X.
  Chai has been partially supported by National Research Foundation of Korea
  grant No. 2022R1C1C1013511. Y. Chen has been supported by National Research
  Foundation of Korea grant No. RS-2023-00247299 and partially supported by
  National Research Foundation of Korea grant No. NRF-2020R1A01005698.
\end{acknowledgments*}

\section{Preliminaries}\label{prelim}

Throughout this paper, we assume that $x : M \rightarrow P_+ \subset
\mathbb{H}^{n + 1} $ is an embedding of capillary hypersurface along $P$. We
denote the second fundamental form of the embedding by $h_{i j}$. Since $\nu$
is the outer normal field on $\Sigma$, $h_{i j} = \langle \nabla_{e_i} \nu,
e_j \rangle$. Let $\kappa = (\kappa_1, \cdots, \kappa_n)$ be the eigenvalues
of $(h_{i j})$, i.e., the principal curvatures of $\Sigma$. The $k$-th mean
curvature $S_k$ is defined by
\[ S_r = \frac{1}{r!} \sum_{1 \leq i_1 < \cdots < i_r \leq n} \kappa_1
   \kappa_2 \cdots \kappa_{i_r}, \]
and the normalized mean curvature is defined by $H_r = \binom{n}{r}^{- 1}
S_r$.

The following so-called Newton transformation defined on the tangent bundle
is essential to our formula:
\[ T_0 = \tmop{id}, \]
\[ T_k = S_k I - T_{k - 1} \circ h. \]
The following properties are well-known,
\[ \tmop{tr} (T_r) = (n - r) S_r = (n - r) \binom{n}{r} H_r, \]
\[ \tmop{tr} (T_r \circ h) = (r + 1) S_{r + 1} = (n - k) \binom{n}{r} H_{r +
   1} . \]
\qquad Let $\bar{N}$ denote the outer normal field of $P$ ($P_+$ is the
interior side), $\nu$ denote the outer normal field of the immersed
hypersurface $\Sigma$, $\bar{\nu}$ denote the outer normal field of $\partial
\Sigma \subset P$ and $\eta$ denote the outer conormal field along $\partial
\Sigma \subset \Sigma$. Without loss of generality, assuming $\theta$ as the
angle between $- \nu$ and $\bar{N}$, we have the following relation
\begin{equation}
  \left\{ \begin{array}{l}
    \mu = \sin \theta \overline{N} + \cos \theta \overline{\nu}\\
    \nu = - \cos \theta \overline{N} + \sin \theta \overline{\nu}
  \end{array} \right. . \label{angle}
\end{equation}
Then the following lemma is widely recognized, and we refer to
{\cite{WangXia2019Stablecapillary}} for its proof.

\begin{lemma}
  Let $x : \Sigma \rightarrow P_+ \subset \mathbb{H}^{n + 1}$ be an isometric
  immersion of a capillary hypersurface supported on $P$. Then $\mu$ is a
  principal direction of $\Sigma$, that is,
  \begin{equation}
    \bar{\nabla}_{\mu} \nu = h (\mu, \mu) \mu . \label{pridir}
  \end{equation}
\end{lemma}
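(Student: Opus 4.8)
The plan is to differentiate the constant-angle condition $\langle\nu,\bar N\rangle=-\cos\theta$ along the boundary $\partial\Sigma$ and to exploit that $P$ is totally geodesic. Fix $p\in\partial\Sigma$ and an orthonormal frame $\{e_1,\dots,e_{n-1}\}$ of $T_p\partial\Sigma$, so that $\{\mu,e_1,\dots,e_{n-1}\}$ is an orthonormal basis of $T_p\Sigma$. Since $|\nu|\equiv 1$ we have $\langle\bar\nabla_\mu\nu,\nu\rangle=0$, hence $\bar\nabla_\mu\nu\in T_p\Sigma$; it therefore suffices to prove $\langle\bar\nabla_\mu\nu,e_a\rangle=0$ for every $a$, i.e. that $h(\mu,e_a)=0$. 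By symmetry of the second fundamental form, $h(\mu,e_a)=\langle\bar\nabla_{e_a}\nu,\mu\rangle$, so I will compute the right-hand side instead.

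Next I would differentiate the capillary condition. Since $\theta$ is constant along $\partial\Sigma$ and each $e_a$ is tangent to $\partial\Sigma$, differentiating $\langle\nu,\bar N\rangle=-\cos\theta$ gives
\[
0 = e_a\langle\nu,\bar N\rangle = \langle\bar\nabla_{e_a}\nu,\bar N\rangle + \langle\nu,\bar\nabla_{e_a}\bar N\rangle .
\]
Because $e_a\in T_p\partial\Sigma\subset T_pP$ and $P$ is totally geodesic, $\bar\nabla_{e_a}\bar N=0$: its $\bar N$-component vanishes as $|\bar N|\equiv 1$, and its component tangent to $P$ equals $-A^P(e_a)$, where $A^P$ is the shape operator of $P$, which is zero. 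Hence $\langle\bar\nabla_{e_a}\nu,\bar N\rangle=0$.

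Finally I would substitute the frame relations. Inverting \eqref{angle} yields $\bar N=\sin\theta\,\mu-\cos\theta\,\nu$, so
\[
0 = \langle\bar\nabla_{e_a}\nu,\bar N\rangle = \sin\theta\,\langle\bar\nabla_{e_a}\nu,\mu\rangle - \cos\theta\,\langle\bar\nabla_{e_a}\nu,\nu\rangle = \sin\theta\,h(\mu,e_a),
\]
using again $\langle\bar\nabla_{e_a}\nu,\nu\rangle=0$. Since the contact angle satisfies $\theta\in(0,\pi)$ we divide by $\sin\theta$ to get $h(\mu,e_a)=0$ for all $a$, which combined with the first step gives $\bar\nabla_\mu\nu=h(\mu,\mu)\mu$, as claimed.

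The computation is short, and the only place care is needed is the second step: one must verify that the term $\langle\nu,\bar\nabla_{e_a}\bar N\rangle$ drops out, which is exactly where total geodesy of $P$ enters (more generally umbilicity would suffice, since then $A^P(e_a)=\kappa_P e_a$ is orthogonal to $\nu$). Without such a hypothesis on the support $P$ the conormal $\mu$ need not be a principal direction, so this structural assumption on $P$ is essential rather than cosmetic.
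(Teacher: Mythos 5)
Your argument is correct: differentiating the constant-angle condition $\langle\nu,\bar N\rangle=-\cos\theta$ along $\partial\Sigma$, using that $P$ is totally geodesic (so $\bar\nabla_{e_a}\bar N=0$), and substituting $\bar N=\sin\theta\,\mu-\cos\theta\,\nu$ yields $\sin\theta\,h(\mu,e_a)=0$, and $\sin\theta\neq 0$ gives the claim. The paper itself omits the proof and cites Wang--Xia, and your computation is exactly that standard argument (including the correct observation that umbilicity of the support would suffice, since $\langle\nu,e_a\rangle=0$ kills the extra term), so nothing further is needed.
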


This property of capillary hypersurfaces is essential in the proof of the
Minkowski type formula in the next section.

\

\section{Minkowski type formula}\label{sec:minkowski}

In this section, we introduce a new Minkowski type formula, which is based on
the the following properties (see {\cite{guo2022stable}}).

\begin{proposition}
  \label{grad}It satisfies that
  \begin{equation}
    \bar{\nabla}_Z x = - \langle Z, \overline{E}_{n + 1} \rangle x + \langle
    Z, x \rangle \overline{E}_{n + 1}, \label{gradx}
  \end{equation}
  \begin{equation}
    \bar{\nabla}_Z  \bar{x} = \langle \bar{x}, Z \rangle \bar{E}_{n + 1},
    \label{gradxbar}
  \end{equation}
  \begin{equation}
    \bar{\nabla}_Z E_1 = - \langle Z, \overline{E}_{n + 1} \rangle E_1 +
    \langle Z, E_1 \rangle \overline{E}_{n + 1}, \label{gradE1}
  \end{equation}
  \begin{equation}
    \overline{\nabla }_Z (- E_{n + 1}) = \frac{1}{x_{n + 1}} Z, \label{gradE}
  \end{equation}
  and
  \begin{equation}
    \bar{\nabla}_Z (- \overline{E}_{n + 1}) = Z - \langle \overline{E }_{n +
    1}, Z \rangle \overline{E }_{n + 1} . \label{gradEbar}
  \end{equation}
\end{proposition}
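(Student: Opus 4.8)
The plan is to verify each of the five identities by a direct computation in the Poincaré half-space model, exploiting the conformal relation $\langle\cdot,\cdot\rangle = x_{n+1}^{-2}\langle\cdot,\cdot\rangle_\delta$ and the standard formula relating the Levi-Civita connections of two conformal metrics. Write $\bar g = e^{2\varphi}\delta$ with $\varphi = -\log x_{n+1}$, so that $\tilde\nabla\varphi = -x_{n+1}^{-1}E_{n+1}$ in the Euclidean sense. For any vector fields $Z,W$ one has the conformal change formula
\[
\bar\nabla_Z W = \tilde\nabla_Z W + (Z\varphi)W + (W\varphi)Z - \langle Z,W\rangle_\delta\,\tilde\nabla\varphi,
\]
and I would record once and for all that $Z\varphi = -x_{n+1}^{-1}\langle Z,E_{n+1}\rangle_\delta$.

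First I would treat \eqref{gradE}, which is the cleanest: since $-E_{n+1}$ is a constant Euclidean vector field, $\tilde\nabla_Z(-E_{n+1})=0$, and plugging $W=-E_{n+1}$ into the conformal formula gives $\bar\nabla_Z(-E_{n+1}) = (Z\varphi)(-E_{n+1}) + ((-E_{n+1})\varphi)Z - \langle Z,-E_{n+1}\rangle_\delta\tilde\nabla\varphi$; substituting $\varphi = -\log x_{n+1}$ and simplifying, the $E_{n+1}$-terms cancel and one is left with $x_{n+1}^{-1}Z$. For \eqref{gradE1}, note $E_1$ is again a constant Euclidean field, so the same substitution with $W=E_1$ yields $-\langle Z,\bar E_{n+1}\rangle E_1 + \langle Z,E_1\rangle\bar E_{n+1}$ once one rewrites the Euclidean pairings and the normalized field $\bar E_{n+1} = x_{n+1}E_{n+1}$ in terms of the hyperbolic metric (here $\langle Z,\bar E_{n+1}\rangle = x_{n+1}^{-1}\langle Z,E_{n+1}\rangle_\delta$ etc.). Identity \eqref{gradEbar} is the special case of \eqref{gradE1} for the position-type field $-\bar E_{n+1}$, or is obtained directly by the same mechanism; \eqref{gradx} with $W=x$ (the Euclidean position vector, for which $\tilde\nabla_Z x = Z$) and \eqref{gradxbar} with $\bar x$ follow identically — in each case the only work is bookkeeping the conversions between $\langle\cdot,\cdot\rangle$ and $\langle\cdot,\cdot\rangle_\delta$ and between $E_{n+1}$ and $\bar E_{n+1}$.

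Alternatively, and perhaps more transparently, one can cite \cite{guo2022stable} directly as the statement itself indicates, or rederive the identities from the Killing/conformal-Killing structure of hyperbolic space: $-E_{n+1}$ generates a one-parameter group of homotheties (dilations about the boundary point at infinity given by the origin), which is exactly the content of \eqref{gradE}, and $x$, $\bar x$, $E_1$, $\bar E_{n+1}$ are then built from it by the product rule for $\bar\nabla$. I do not anticipate a genuine obstacle here — the result is standard — but the one place to be careful is sign and normalization conventions: keeping straight the distinction between $E_{n+1}$ (Euclidean coordinate field) and $\bar E_{n+1}$ (its hyperbolic-unit rescaling), and the fact that the position vector appears in two guises $x$ and $\bar x$; a single sign slip in $Z\varphi$ propagates through all five formulas, so I would double-check \eqref{gradE} against the known fact that geodesic spheres centered on the $x_{n+1}$-axis are umbilic before trusting the rest.
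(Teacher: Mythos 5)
Your proposal is correct and takes essentially the same route as the paper, which simply invokes the conformal relation between the Levi-Civita connections of $\bar g = x_{n+1}^{-2}\delta$ and $\delta$ and refers to {\cite{guo2022stable}}; your computation with $\varphi=-\log x_{n+1}$, $\bar E_{n+1}=x_{n+1}E_{n+1}$ (and $\bar x = x_{n+1}x$ for \eqref{gradxbar}) reproduces all five identities. The only inaccuracy is in your optional alternative remark: $-E_{n+1}$ generates Euclidean vertical translations, which are conformal but not homothetic/isometric for the hyperbolic metric (consistent with \eqref{confE}), though this does not affect the main computational argument.
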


These can be directly calculated by the relation between Levi-Civita
connections of the metrics conformal to each other, we refer the proof to
{\cite{guo2022stable}}. From \eqref{gradx}, \eqref{gradE1}, and \eqref{gradE},
we can easily see that $x$, $E_1$, and $E_{n + 1}$ are conformal Killing
vector fields, that is,
\begin{equation}
  (\mathcal{L}_x \bar{g}) (X, Y) = \frac{1}{2} (\langle \overline{\nabla }_Y
  x, X \rangle + \langle \overline{\nabla }_X x, Y \rangle) = 0, \label{confx}
\end{equation}
\begin{equation}
  (\mathcal{L}_{E_1} \bar{g}) (X, Y) = \frac{1}{2} (\langle \overline{\nabla
  }_Y E_1, X \rangle + \langle \overline{\nabla }_X E_1, Y \rangle) = 0,
  \label{confE1}
\end{equation}
and
\begin{equation}
  \begin{array}{ccl}
    \mathcal{L}_{- E_{n + 1}} \langle X, Y \rangle & = & \frac{1}{2} (\langle
    \overline{\nabla }_Y (- E_{n + 1}), X \rangle + \langle \overline{\nabla
    }_X (- E_{n + 1}), Y \rangle)\\
    & = & \frac{1}{x_{n + 1}} \langle X, Y \rangle, \label{confE}
  \end{array}
\end{equation}

for any $X, Y \in T\mathbb{H}^{n + 1}$, where $\bar{g}$ denotes the metric
$\langle \cdummy, \cdummy \rangle$.

Restricting the equation \eqref{confE} to $T \Sigma$, we have
\begin{equation}
  \langle \nabla^{\Sigma}_{e_i} E_1, e_j \rangle = - \langle e_i,
  \overline{E}_{n + 1} \rangle \langle E_1, e_j \rangle + \langle e_i, E_1
  \rangle \langle \overline{E}_{n + 1}, e_j \rangle - h_{i j} \langle E_1
  \comma \nu \rangle, \label{divE1}
\end{equation}
\begin{equation}
  \langle \nabla^{\Sigma}_{e_i} x, e_j \rangle = - \langle e_i,
  \overline{E}_{n + 1} \rangle \langle x, e_j \rangle + \langle e_i, x \rangle
  \langle \overline{E}_{n + 1}, e_j \rangle - h_{i j} \langle x \comma \nu
  \rangle, \label{divx}
\end{equation}
and
\begin{equation}
  \langle \nabla^{\Sigma}_{e_i} (- E_{n + 1}), e_j \rangle = \frac{1}{x_{n +
  1}} \langle e_i, e_j \rangle - h_{i j} \langle - E_{n + 1} \comma \nu
  \rangle . \label{divE}
\end{equation}
Let $x : \Sigma \rightarrow (\mathbb{H}^{n + 1}, \langle \cdummy, \cdummy
\rangle) = \left( \mathbb{R}^{n + 1}_+, \frac{1}{x^2_{n + 1}} \langle \cdummy
\nocomma, \cdummy \rangle_{\delta} \right)$ be an immersion of
$\theta$-capillary hypersurface supported on the hyperplane $P$. By using the
facts above, we can prove the following Minkowski type formula on $\Sigma$.

\begin{proposition}
  For $k = 1, \cdots, n,$and $c > 0$, it satisfies that
  \begin{equation}
    \int_{\Sigma} \left[ n c H_{k - 1} \left( \frac{1}{x_{n + 1}} - \cos
    \theta \langle E_1, \nu \rangle \right) - H_k \langle x - c E_{n + 1}, \nu
    \rangle \right] d A = 0, \label{Mink1}
  \end{equation}
  where $H_k$ is the $k$-th mean curvature of $\Sigma$.
\end{proposition}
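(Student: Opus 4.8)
The plan is to prove \eqref{Mink1} by integrating the surface divergence of a vector field built from the Newton transformation $T_{k-1}$ and the conformal fields of Proposition~\ref{grad}, and then to convert the resulting conormal integral over $\partial\Sigma$ into a bulk integral over $\Sigma$ by two further applications of the divergence theorem: one inside the totally geodesic hyperplane $P\cong\mathbb{H}^n$ and one inside the region enclosed by $\Sigma$. Throughout I use that the Newton tensors of a hypersurface of a space form are divergence free, $\operatorname{div}_\Sigma T_{k-1}=0$.

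I would first settle $k=1$, which already displays the whole mechanism. Put $X_c:=x-cE_{n+1}$ and let $X_c^{\top}$ be its tangential projection. By \eqref{divx} the $x$-part of $\nabla^\Sigma X_c^{\top}$ is skew-symmetric apart from the term $-h_{ij}\langle x,\nu\rangle$, which dies against the symmetric tensor $T_0=\operatorname{id}$, and by \eqref{divE} the $-cE_{n+1}$-part contributes $\tfrac{c}{x_{n+1}}\langle e_i,e_j\rangle-h_{ij}\langle-cE_{n+1},\nu\rangle$; taking the trace gives $\operatorname{div}_\Sigma X_c^{\top}=\tfrac{nc}{x_{n+1}}-H\langle x-cE_{n+1},\nu\rangle$ with $H=\sum_i\kappa_i$. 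Integrating and using the divergence theorem on $\Sigma$,
\[
 \int_\Sigma\Big(\tfrac{nc}{x_{n+1}}-H\langle x-cE_{n+1},\nu\rangle\Big)\,dA=\int_{\partial\Sigma}\langle X_c,\mu\rangle\,ds .
\]
Along $\partial\Sigma\subset P=\{x_1=0\}$ both $x$ and $E_{n+1}$ are tangent to $P$, so $\langle x,\bar N\rangle=\langle E_{n+1},\bar N\rangle=0$, and \eqref{angle} gives $\langle X_c,\mu\rangle=\cos\theta\,\langle X_c,\bar\nu\rangle$. Since $\partial\Sigma$ bounds a compact region $D\subset P$ with outward normal $\bar\nu$, and $X_c|_P$ is a Killing field plus a closed conformal Killing field of factor $c/x_{n+1}$ (so $\operatorname{div}_P X_c=nc/x_{n+1}$), the divergence theorem in $P$ gives $\int_{\partial\Sigma}\langle X_c,\bar\nu\rangle\,ds=\int_D\tfrac{nc}{x_{n+1}}\,dV_P$. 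Applying the divergence theorem in $\mathbb{H}^{n+1}$ to the Killing field $E_1$ on the region bounded by $\Sigma$ and $D$, with $\langle E_1,\bar N\rangle=-x_{n+1}^{-1}$ on $P$, yields $\int_D x_{n+1}^{-1}\,dV_P=\int_\Sigma\langle E_1,\nu\rangle\,dA$, hence $\int_{\partial\Sigma}\langle X_c,\mu\rangle\,ds=nc\cos\theta\int_\Sigma\langle E_1,\nu\rangle\,dA$. Substituting back produces \eqref{Mink1} for $k=1$.

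For general $k$ I would run the same scheme with $Z:=T_{k-1}(X_c^{\top})$: using $\operatorname{div}_\Sigma T_{k-1}=0$, $\operatorname{tr} T_{k-1}=(n-k+1)S_{k-1}$ and $\operatorname{tr}(T_{k-1}\circ h)=kS_k$ gives $\operatorname{div}_\Sigma Z=(n-k+1)\tfrac{c}{x_{n+1}}S_{k-1}-kS_k\langle X_c,\nu\rangle$, whose integral equals $\int_{\partial\Sigma}\langle Z,\mu\rangle\,ds$. Along $\partial\Sigma$ the conormal $\mu$ is a principal direction by \eqref{pridir}, hence an eigenvector of $T_{k-1}$, and $\langle Z,\mu\rangle=\sigma_{k-1}\cos\theta\,\langle X_c,\bar\nu\rangle$, where $\sigma_{k-1}$ is the $(k-1)$-st symmetric function of the principal curvatures of $\Sigma$ in the directions tangent to $\partial\Sigma$; since $P$ is totally geodesic, \eqref{angle} forces those curvatures to be $\sin\theta$ times the principal curvatures of $\partial\Sigma$ in $P$, so $\sigma_{k-1}=\sin^{k-1}\theta\,S^{\partial\Sigma}_{k-1}$. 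One then rewrites $\int_{\partial\Sigma}\langle Z,\mu\rangle\,ds$ through the Minkowski formula for the closed hypersurface $\partial\Sigma$ inside $P\cong\mathbb{H}^n$ (for the field $X_c|_P$), together with auxiliary Newton-tensor divergence identities on $\Sigma$ for $E_1^{\top}$, $x^{\top}$ and $(-E_{n+1})^{\top}$; as in the case $k=1$, the $\langle E_1,\nu\rangle$-term of \eqref{Mink1} arises by transporting the conormal integral first into $P$ and then back onto $\Sigma$. The main obstacle for $k\ge 2$ is coordinating these interior identities, the $P$-Minkowski identity, and the Gauss-type relation between the second fundamental forms of $\partial\Sigma$ in $\Sigma$ and in $P$ so that everything cancels except the two terms appearing in \eqref{Mink1}.
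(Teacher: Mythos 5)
Your case $k=1$ is correct, and it is a genuinely different route from the paper's: after computing $\operatorname{div}_{\Sigma}(x-cE_{n+1})^{\top}$ from \eqref{divx} and \eqref{divE}, you dispose of the conormal boundary integral by two further divergence theorems, one in $P$ for $X_c|_P$ (using $\operatorname{div}_P x=0$, $\operatorname{div}_P(-E_{n+1})=n/x_{n+1}$) and one in the enclosed bulk region for the Killing field $E_1$ (using $\langle E_1,\bar N\rangle=-x_{n+1}^{-1}$ on $P$); this cleanly yields $\int_{\partial\Sigma}\langle X_c,\mu\rangle\,ds=nc\cos\theta\int_{\Sigma}\langle E_1,\nu\rangle\,dA$ and hence \eqref{Mink1} for $k=1$. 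The paper instead stays entirely on $\Sigma$, which is exactly what makes its argument go through for all $k$.

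For $k\ge 2$ your proposal has a genuine gap, and you flag it yourself: the step that must be proved is
\[
\cos\theta\int_{\partial\Sigma}S_{k-1;\mu}\,\langle x-cE_{n+1},\bar\nu\rangle\,ds
=(n-k+1)\binom{n}{k-1}\,c\cos\theta\int_{\Sigma}H_{k-1}\langle E_1,\nu\rangle\,dA,
\]
and the tools you name will not produce it. A Minkowski-type formula for the closed hypersurface $\partial\Sigma$ inside $P$ only trades $\int_{\partial\Sigma}S^{\partial\Sigma}_{k-1}\langle X_c,\bar\nu\rangle\,ds$ for lower-order symmetric functions of $\partial\Sigma$ weighted by $c/x_{n+1}$, i.e. it keeps you on $\partial\Sigma$ and never connects to the extrinsic quantity $\int_{\Sigma}S_{k-1}\langle E_1,\nu\rangle\,dA$ (your bulk trick with $E_1$ has no analogue once curvatures of $\Sigma$ enter the weight). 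Moreover, the auxiliary identities you list for $E_1^{\top}$, $x^{\top}$, $(-E_{n+1})^{\top}$ are the wrong ones: contracting \eqref{divE1} with $T_{k-1}$ gives $\operatorname{tr}(T_{k-1}\circ h)\langle E_1,\nu\rangle=kS_k\langle E_1,\nu\rangle$, i.e. order $k$, not the needed $S_{k-1}\langle E_1,\nu\rangle$. The paper's device is precisely designed to fix this: it uses the tangential \emph{rotation-type} fields $Z_{1,n+1}=\langle E_1,\nu\rangle\bar E_{n+1}-\langle\bar E_{n+1},\nu\rangle E_1$ and $Z_1=\langle E_1,\nu\rangle\bar x-\langle\bar x,\nu\rangle E_1$, whose gradients contain a $-\delta_{ij}\langle E_1,\nu\rangle$ term, so that $\operatorname{div}_{\Sigma}(T_{k-1}Z_{1,n+1})=-(n-k+1)\binom{n}{k-1}H_{k-1}\langle E_1,\nu\rangle$ and $\operatorname{div}_{\Sigma}(T_{k-1}Z_1)=0$, together with the boundary identity $\langle x-cE_{n+1},\bar\nu\rangle=\langle Z_1-cZ_{1,n+1},\mu\rangle$ on $\partial\Sigma$ and the principal-direction property \eqref{pridir}; a second divergence theorem on $\Sigma$ then closes the argument. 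Without identifying these (or equivalent) fields and the boundary identity, your scheme for general $k$ does not cancel down to the two terms of \eqref{Mink1}, so the proposition is only established for $k=1$.
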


\begin{proof}
  Let $T_r$ acts on the both side of $\eqref{divx} + c \eqref{divE}$ and
  integrate. Using divergence theorem, we have
  \begin{eqnarray*}
  	   &    & (n - r + 1) \binom{n}{r - 1} \int_{\Sigma} \left[ H_{r - 1} \left(
  	\frac{c}{x_{n + 1}} \right) - H_r \langle x - c E_{n + 1}, \nu \rangle
  	\right] d A\\
  	&= & \int_{\Sigma} \tmop{div}_{\Sigma} (T_{r - 1} (x - c E_{n + 1})) d
  	A\\
  	&= & \int_{\partial \Sigma} T_{r - 1} (x - c E_{n + 1}, \mu) d A\\
  	&= & \cos \theta \int_{\partial \Sigma} S_{r - 1 ; \mu} \langle x - c
  	E_{n + 1}, \bar{\nu} \rangle d s,
  \end{eqnarray*}

  where in the last equality we used \eqref{pridir}, the angle relation
  \eqref{angle} and the fact that $\bar{N} = - x_{n + 1} E_1$.
  
  Let $Z_{1, n + 1} = \langle E_1, \nu \rangle \bar{E}_{n + 1} - \langle
  \bar{E}_{n + 1}, \nu \rangle E_1$, applying Proposition \ref{grad}, we have
  \begin{eqnarray*}
    \langle \nabla_{e_i} Z_{1, n + 1}, e_j \rangle &=& - \delta_{i j}
    \langle E_1, \nu \rangle + h_{i k} [\langle E_1, e_k \rangle \langle
    \overline{E}_{n + 1}, e_j \rangle \\
    &&- \langle \overline{E}_{n + 1}, e_k
    \rangle \langle E_1, e_j \rangle] .
  \end{eqnarray*}
  Let $T_{r - 1}$ act on both sides, we get
  \[ \tmop{div}_{\Sigma}  (T_{r - 1} (Z_{1, n + 1})) = - (n - r + 1)
     \binom{n}{r - 1} H_{r - 1} \langle E_1, \nu \rangle . \]
  Considering the vector field $Z_1 = \overline{g} (E_1, \nu) \bar{x} -
  \overline{g} (\bar{x}, \nu) E_1$, from Proposition \ref{grad} we have
  \begin{eqnarray*}
    \langle \bar{\nabla}_{e_i} Z_1, e_j \rangle & = & \langle \bar{E}_{n + 1},
    \nu \rangle [\langle e_i, E_1 \rangle \langle \bar{x}, e_j \rangle -
    \langle e_i, \bar{x} \rangle \langle E_1, e_j \rangle]\\
    &  & + \langle E_1, \nu \rangle [\langle e_i, \bar{x} \rangle \langle
    \bar{E}_{n + 1}, e_j \rangle - \langle e_{i,} \bar{E}_{n + 1} \rangle
    \langle \bar{x}, e_j \rangle]\\
    &  & + h_{i k} [\langle E_1, e_k \rangle \langle \bar{x}, e_j \rangle -
    \langle \bar{x}, e_k \rangle \langle E_1, e_j \rangle]\\
    &  & + \langle \bar{x}, \nu \rangle [\langle \bar{E}_{n + 1}, e_i \rangle
    \langle E_1, e_j \rangle - \langle E_1, e_i \rangle \langle \bar{E}_{n +
    1}, e_j \rangle] .
  \end{eqnarray*}
  Similarly, we have
  \[ \tmop{div}_{\Sigma}  (T_{r - 1} (Z_1)) = 0. \]
  Combining all the equations above, we have

   \begin{eqnarray*}
       && (n - k + 1) \binom{n}{k - 1} \int_{\Sigma} \left(  \frac{c H_{k -
       1}}{x_{n + 1}} - H_k \langle x - c E_{n + 1}, \nu \rangle \right) d A\\
      & = & \cos \theta \int_{\partial \Sigma} S_{k - 1 ; \mu} \langle x - c
       E_{n + 1}, \bar{\nu} \rangle d s\\
      & = & \cos \theta \int_{\partial \Sigma} S_{k - 1 ; \mu} \langle Z_1 - c
       Z_{1, n + 1}, \mu \rangle d s\\
       &= & \cos \theta \int_{\partial \Sigma} T_{k - 1} (Z_1 - c Z_{1, n + 1},
       \mu) d s\\
      & = & \cos \theta \int_{\Sigma} \tmop{div}_{_{\Sigma}} (T_{k - 1} (Z_1 -
       c Z_{1, n - 1})) d s\\
      & = & (n - k + 1) \binom{n}{k - 1} \cos \theta \int_{\Sigma} H_{k - 1}
       \langle c E_1, \nu \rangle d A.
     \end{eqnarray*} 

  Therefore, we obtain the Minkowski type formula \eqref{Mink1}.
\end{proof}

Let $k = 1$, the Minkowski formula \eqref{Mink1} becomes
\begin{equation}
  \int_{\Sigma} \left[ n c \left( \frac{1}{x_{n + 1}} - \cos \theta \langle
  E_1, \nu \rangle \right) - H \langle x - c E_{n + 1}, \nu \rangle \right] d
  A = 0, \label{Mink}
\end{equation}
which holds for any capillary hypersurfaces $\Sigma$ supported on totally
geodesic hyperplane $P$. Here $H = n H_1$ is the mean curvature of $\Sigma$.

\begin{remark}
  The second author and Juncheol Pyo {\cite{chen2022some}} gave another version of Minkowski type formula on capillary hypersurfaces supported on a totally geodesic plane, which is presented in Poincar{\'e} ball model $(\mathbb{H}^{n + 1},
  \bar{g}) = (\mathbb{B}^{n + 1}, \frac{4}{(1 - | x |^2)^2} \delta)$ as follows ($\mathbb B^{n+1}$ is an Euclidean unit ball and $\delta$ is the Euclidean metric in $\mathbb B^{n+1}$).
  \begin{equation}
    \int_{\Sigma} [n V_0 - n \cos \theta \bar{g} (Y_{n + 1}, \nu) - H \bar{g}
    (x, \nu)] d A = 0, \label{MinkP}
  \end{equation}
  where $\Sigma$ is a $\theta$-capillary hypersurface supported on a totally
  geodesic hypersurface $P' = \{ x \in \mathbb{H}^{n + 1}, \bar{g} (x, E_{n +
  1}) = 0 \}$ (in Poincar{\'e} ball model), $V_0 = (1 + | x |^2) / (1 - | x
  |^2)$, $x$ is the position vector and $Y_{n + 1} = \delta (x, E_{n + 1}) x -
  \frac{1}{2} (1 + | x |^2) E_{n + 1}$. But unfortunately, we cannot find any
  umbilical $\theta$-capillary hypersurfaces where the integrand in
  \eqref{MinkP} is identically zero.
\end{remark}

Let $\hat{\Sigma}$ denote the domain enclosed by $\Sigma$ and $P$, and $|
\widehat{\partial \Sigma} |$ be the domain enclosed by $\partial \Sigma$ on
$P$. The energy functional defined by
\[ \mathcal{Q} (\Sigma) = | \Sigma | - \cos \theta | \widehat{\partial
   \Sigma} | \]
is well-known since the critical hypersurface of this functional under any
volume preserving variation is a $\theta$-capillary hypersurface with constant
mean curvature, see {\cite{ros1997stability}} and
{\cite{WangXia2019Stablecapillary}}. Under a flow $\Sigma_t = x_t (M)$ with
with the given normal velocity $f$ and capillary boundary condition as in
\eqref{flow}, the following variation formula is well-known:
\[ \frac{d}{d t} | \hat{\Sigma}_t | = \int_{\Sigma_t} f d A_t \]
and
\[ \frac{d}{d t} \mathcal{Q} (\Sigma_t) = \text{$\int_{\Sigma_t} H f d A_t
   .$} \]
From the Minkowski formula $\eqref{Mink}$, it is evident that the flow
described in \eqref{flow} is a volume preserving flow.

\section{Scalar equation of the flow}\label{sec:scalar}

In this section, we express the flow \eqref{flow} by a scalar equation of the
radius function.

Let $x$ be the position vector defined on $M$ which is represented by
\[ x = c E_{n + 1} + \rho (z) z, z \in \Omega \subset \bar{\mathbb{S}}^n_+, \]
where $\rho$ defined on $\mathbb{S}^n_+$ is the distance between $x$ and $c
E_{n + 1}$ in the Euclidean metric. Since $\Sigma$ is smooth, star-shaped, the
function $\rho$ is well-defined and smooth on $\Omega$. Let $u = \log \rho$,
then $u \in C^2 (\Omega) \cap C^0 (\bar{\Omega})$. Define $v = \sqrt{1 + |
\nabla u |^2}$, where $\nabla u$ is the gradient of $u$ with respect to the
ordinary metric on $\mathbb{S}^n_+$. From the basic facts for radial function,
it is well-known that
\[ \tilde{\nu} : = x^{- 1}_{n + 1} \nu = \frac{\zeta - \rho^{- 1} \nabla
   u}{v}, \]
where $\zeta = \partial_{\rho}$.

Throughout the paper, we let the indices $i, j, k$ range from 1 to $n$ and we
will apply the Einstein convention.

We use polar coordinates $(\rho, \beta, \gamma, \xi) \in [0, + \infty) \times
\left[ 0, \frac{\pi}{2} \right] \times [0, 2 \pi] \times \mathbb{S}^{n - 2} $,
where $\xi$ is the spherical coordinate on $\mathbb{S}^{n - 2}$, and the
star-shaped hypersurface $\Sigma \assign x (M)$ can be written as
\begin{equation}
  x - c E_{n + 1} = \rho (z) z = \rho (\beta, \gamma, \xi) z, \hspace{1.8em}
  \tmop{where} z \assign (\beta, \gamma, \xi) \in \bar{\mathbb{S}}_+^n,
  \label{sphcoord}
\end{equation}
where $x_1 = \rho \cos \beta$ and $x_{n + 1} = \rho \sin \beta \cos \gamma + c
= e^{- w}$. Then the Euclidean metric $d s^2 = \langle \cdummy, \cdummy
\rangle_{\delta}$ can be written as
\[ d s^2 = d \rho^2 + \rho^2 \sigma_{\mathbb{S}^n_+} = d \rho^2 + \rho^2 d
   \beta^2 + \rho^2 \sin^2 \beta d \gamma^2 + \rho^2 \sin^2 \beta \sin^2
   \gamma \sigma_{\mathbb{S}^{n - 2} }, \]
and we have
\[ \rho^2 = x_1^2 + \sum_{i = 2}^n x_i^2 + (x_{n + 1} - c)^2 . \]
Now we can represent $E_1$, $E_{n + 1}$ by the coordinate \eqref{sphcoord}.
Indeed, since
\[ \frac{\partial \rho}{\partial x_1} = \frac{x_1}{\rho} = \cos \beta, \]
and
\[ - \sin \beta \frac{\partial \beta}{\partial x_1} = \frac{\partial (\cos
   \beta)}{\partial x_1} = \frac{\partial (x_1 / \rho)}{\partial x_1} =
   \frac{\sin^2 \beta}{\rho}, \]
we can represent $E_1$ by the polar coordinate defined above,
\begin{equation}
  E_1 = \frac{\partial}{\partial x_1} = \frac{\partial \rho}{\partial x_1}
  \partial_{\rho} + \frac{\partial \beta}{\partial x_1} \partial_{\beta} =
  \cos \beta \partial_{\rho} - \frac{\sin \beta}{\rho} \partial_{\beta} .
  \label{repE1}
\end{equation}
Similarly, since
\[ \frac{\partial \rho}{\partial x_{n + 1}} = \frac{x_{n + 1} - c}{\rho} =
   \sin \beta \cos \gamma, \]
\[ - \sin \beta \frac{\partial \beta}{\partial x_{n + 1}} = \frac{\partial
   (\cos \beta)}{\partial x_{n + 1}} = \frac{\partial (x_1 / \rho)}{\partial
   x_{n + 1}} = - \frac{1}{\rho} \sin \beta \cos \beta \cos \gamma \]
and
\[ - \sin \gamma \frac{\partial \gamma}{\partial x_{n + 1}} = \frac{\partial
   (\cos \gamma)}{\partial x_{n + 1}} = \frac{\partial ((x_{n + 1} - c) /
   (\rho \sin \beta))}{\partial x_{n + 1}} = \frac{\sin^2 \gamma}{\rho \sin
   \beta}, \]
we have
\begin{equation}
  E_{n + 1} = \frac{\partial}{\partial x_{n + 1}} = \sin \beta \cos \gamma
  \partial_{\rho} + \frac{1}{\rho} \cos \beta \cos \gamma \partial_{\beta} -
  \frac{\sin \gamma}{\rho \sin \beta} \partial_{\gamma} . \label{repE}
\end{equation}
Denoting $u_{\beta} = \sigma (\nabla u, \partial_{\beta})$ and $u_{\gamma} =
\sigma (\nabla u, \partial_{\gamma})$, from \eqref{repE1} and \eqref{repE} we
have
\begin{equation}
  \langle E_1, \nu \rangle = e^{2 w} \langle \cos \beta \partial_{\rho} -
  \frac{\sin \beta}{\rho} \partial_{\beta}, e^{- w} \frac{\zeta - \rho^{- 1}
  \nabla u}{v} \rangle_{\delta} = e^w \frac{\cos \beta - \sin \beta
  u_{\beta}}{v} \label{suppE1},
\end{equation}
and
\begin{equation}
  \begin{aligned}
    \langle E_{n + 1}, \tilde{\nu} \rangle_{\delta} = & \langle \sin \beta
    \cos \gamma \partial_{\rho} + \frac{1}{\rho} \cos \beta \cos \gamma
    \partial_{\beta} - \frac{\sin \gamma}{\rho \sin \beta} \partial_{\gamma},
    \frac{\zeta - \rho^{- 1} \nabla u}{v} \rangle_{\delta}\\
    = & \frac{1}{v} (\sin \beta \cos \gamma - \cos \beta \cos \gamma
    u_{\beta} + \sin \beta \sin \gamma u_{\gamma}) .
  \end{aligned} \label{suppE}
\end{equation}
By definition, we have
\begin{equation}
  \langle x - c E_{n + 1}, \nu \rangle = e^{2 w} \langle \rho \zeta, e^{- w}
  \frac{\zeta - \rho^{- 1} \nabla u}{v} \rangle = \frac{\rho e^w}{v} .
  \label{supp}
\end{equation}
Moreover, by the conformality of mean curvature,
\begin{equation}
  \begin{aligned}
    H  = & e^{- w} \left[ \frac{n}{\rho v} - \frac{1}{\rho v} \left(
    \sigma^{i j} - \frac{u^i u^j}{v^2} \right) u_{i j} \right] - n
    D_{\tilde{\nu}} e^{- w}\\
    = & e^{- w} \left[ \frac{n}{\rho v} - \frac{1}{\rho v} \left( \sigma^{i
    j} - \frac{u^i u^j}{v^2} \right) u_{i j} \right] - n \langle E_{n + 1},
    \tilde{\nu} \rangle_{\delta}\\
    = & - \frac{1}{\rho v e^w} \left( \sigma^{i j} - \frac{u^i u^j}{v^2}
    \right) u_{i j} + \frac{1}{v} (\cos \beta \cos \gamma u_{\beta} - \sin
    \beta \sin \gamma u_{\gamma}) + \frac{n c}{\rho v},
  \end{aligned} \label{confH}
\end{equation}
where $\sigma^{i j}$ corresponds to the inverse of the metric on
$\mathbb{S}^n_+$, $u^i = \sigma^{i j} u_j$ is the $i$-th component of $\nabla
u$ (the gradient of $u$ on $\mathbb{S}^n_+$) and $u_{i j}$ is the $(i, j)$-th
component of the Hessian $\nabla^2 u$ on $\mathbb{S}^n_+$. Then by
calculation,
\begin{eqnarray*}
     q_c & = & n c \left( \frac{1}{x_{n + 1}} - \cos \theta \langle E_1, \nu
     \rangle \right) - H \langle x - c E_{n + 1}, \nu \rangle\\
     & = & n c e^w - n c \cos \theta v^{- 1} e^w (\cos \theta - \sin \beta
     u_{\beta})\\
     &  & - \left( - \frac{1}{\rho v e^w} \left( \sigma^{i j} - \frac{u^i
     u^j}{v^2} \right) u_{i j} + \frac{1}{v} (\cos \beta \cos \gamma u_{\beta}
     - \sin \beta \sin \gamma u_{\gamma}) + \frac{n c}{\rho v} \right)
     \frac{\rho e^w}{v} .
\end{eqnarray*}
Writing the flow as a function $u$ of $t$, from the flow \eqref{flow}, we see
that
\[ q_c = \langle \partial_t x, \nu \rangle = \rho_t e^{- w} e^{2 w} \langle
   \zeta, \frac{\zeta - \rho^{- 1} \nabla u}{\sqrt{1 + | \nabla u |^2}}
   \rangle_{\delta} = \frac{\rho_t e^w}{\sqrt{1 + | \nabla u |^2}} = \frac{u_t
   \rho e^w}{v} . \]
Therefore, we obtain the evolution equation of $u$ as follows,
\begin{equation}
  \begin{aligned}
    u_t = & \frac{v}{\rho e^w} q_c\\
     =& \frac{n c v}{\rho} - \frac{n c \cos \theta}{\rho} (\cos \beta -
    \sin \beta u_{\beta}) + \frac{1}{\rho e^w v} \left( \sigma^{i j} -
    \frac{u^i u^j}{v^2} \right) u_{i j}\\
    &   + \frac{n}{v} (- \cos \beta \cos \gamma u_{\beta} + \sin \beta \sin
    \gamma u_{\gamma}) - \frac{n c}{\rho v}\\
     = & \frac{n c}{\rho} \frac{| \nabla u |^2}{v} - \frac{n c \cos
    \theta}{\rho} (\cos \beta - \sin \beta u_{\beta})\\
      & + \frac{1}{\rho e^w v} \left( \sigma^{i j} - \frac{u^i u^j}{v^2}
    \right) u_{i j} + \frac{n}{v} (- \cos \beta \cos \gamma u_{\beta} + \sin
    \beta \sin \gamma u_{\gamma})\\
     \backassign & Q_c (\nabla^2 u, \nabla u, \rho, \beta, \gamma) .
  \end{aligned} \label{evolscal}
\end{equation}
As for the boundary condition in \eqref{flow},
\begin{equation*}
  \begin{aligned}
    - \cos \theta & = \langle \nu, \bar{N} \rangle\\
    & = e^{2 w} \langle e^{- w} \tilde{\nu}, - e^{- w} E_1 \rangle_{\delta}\\
    & = \langle \frac{\zeta - \rho^{- 1} \nabla u}{v}, \frac{1}{\rho} \partial_{\beta} \rangle_{\delta}\\
    & = - \frac{1}{v} u_{\beta} .
  \end{aligned}
\end{equation*}
Therefore we have
\begin{equation}
  u_{\beta} = \rho v \cos \theta = \sqrt{1 + | \nabla u |^2} \cos \theta .
  \label{bdryscal}
\end{equation}
Combining \eqref{evolscal} and \eqref{bdryscal}, we know that the flow
\eqref{flow} can be written as the parabolic equation of the scalar function
$u$ as follows,
\begin{equation}
  \left\{\begin{array}{lllll}
    u_t = Q_c (\nabla^2 u, \nabla u, \rho, \beta, \gamma), &  &  & \tmop{in} &
    \mathbb{S}^n_+ \times [0, T) ;\\
    u_{\beta} = \sqrt{1 + | \nabla u |^2} \cos \theta, &  &  & \tmop{on} &
    \partial \mathbb{S}^n_+ \times [0, T) ;\\
    u (\cdummy, 0) = u_0 (.), &  &  & \tmop{on} & \mathbb{S}^n_+,
  \end{array}\right. \label{scalflow}
\end{equation}
where $u_0 = \log \rho_0$ and $\rho_0 = | x_0 - c E_{n + 1} |_{\delta}$ is the
radial function with respect to $c E_{n + 1}$ of the initial hypersurface
$\Sigma_0 = x_0 (M)$.

The short time existence of the flow can be guaranteed by applying the
standard PDE theory to \eqref{scalflow}, due to the assumption on the
star-shapedness of $\Sigma_0 = x_0 (M)$. In the following section, we will
show the uniform $C^0$ and $C^1$-estimates for the equation. \

Before we start, we need the following calculation.
\begin{equation}
  Q_c^{i j} \assign \left. \frac{\partial Q_c (\lambda, \psi, \rho, \beta,
  \gamma)}{\partial \lambda_{i j} } \right|_{\lambda = \nabla^2 u, \psi =
  \nabla u} = \frac{1}{\rho v e^w} \left( \sigma^{i j} - \frac{u^i u^j}{v^2}
  \right), \label{Fij}
\end{equation}
\begin{equation}
  \begin{aligned}
    Q_{c, \psi_i} & \assign \left. \frac{\partial Q_c (\lambda, \psi, \rho, \beta, \gamma)}{\partial \psi_i} \right|_{\lambda = \nabla^2 u, \psi = \nabla u}\\
    & = \frac{n c}{\rho} \left( \frac{2 u_i}{v} - \frac{| \nabla u |^2 u_i}{v^3} \right) - \frac{u_i}{\rho e^w v^3} a^{k l} u_{k l} - \frac{2}{\rho e^w v^3} a^{i l} u_{k l} u_k\\
    & \quad + \frac{n c \cos \theta \sin \beta}{\rho} \sigma (\partial_{\beta}, e_i) + \frac{n u_i}{v^3} (- \cos \beta \cos \gamma u_{\beta} + \sin \beta \sin \gamma u_{\gamma})\\
    & \quad - \frac{n}{v} (\cos \beta \cos \gamma \sigma (\partial_{\beta}, e_i) - \sin \beta \sin \gamma \sigma (\partial_{\gamma}, e_i)),
  \end{aligned} 
\end{equation}
\begin{equation}
  \begin{aligned}
    Q_{c, \rho} & \assign \left. \frac{\partial Q_c (\lambda, \psi, \rho, \beta, \gamma)}{\partial \rho} \right|_{\lambda = \nabla^2 u, \psi = \nabla u}\\
    & = \frac{n c}{\rho^2} \frac{| \nabla u |^2}{v} + \frac{n c \cos \theta}{\rho^2} (\cos \beta - \sin \beta u_{\beta}) - \frac{1}{\rho^2 v} a^{i j} u_{i j},
  \end{aligned} 
  \label{Frho}
\end{equation}
\begin{equation}
  \begin{aligned}
    Q_{c, \beta} & \assign \left. \frac{\partial Q_c (\lambda, \psi, \rho, \beta, \gamma)}{\partial \beta} \right|_{\lambda = \nabla^2 u, \psi = \nabla u}\\
    & = \frac{1}{v} \cos \beta \cos \gamma a^{i j} u_{i j} + n c \cos \theta \left( \frac{\sin \beta}{\rho} + \frac{\cos \beta}{\rho } u_{\beta} \right)\\
    & \quad + \frac{n}{v} (\sin \beta \cos \gamma u_{\beta} + \cos \beta \sin \gamma u_{\gamma}),
  \end{aligned} 
  \label{Fbeta}
\end{equation}
\begin{equation}
  \begin{aligned}
    Q_{c, \gamma} & \assign \left. \frac{\partial Q_c (\lambda, \psi, \rho, \beta, \gamma)}{\partial \gamma} \right|_{\lambda = \nabla^2 u, \psi = \nabla u}\\
    & = - \frac{1}{v} \sin \beta \sin \gamma a^{i j} u_{i j} + \frac{n}{v} (\cos \beta \sin \gamma u_{\beta} + \sin \beta \cos \gamma u_{\gamma}),
  \end{aligned} 
  \label{Fgama}
\end{equation}

where $a^{i j} = \sigma^{i j} - \frac{u^i u^j}{v^2}$. Now we are ready to
prove the $C^0$ and $C^1$-estimates.

\section{$C^0$ estimate}\label{sec:c0}

Wang and Weng {\cite{wang2020mean}} proved a $C^0$-estimate for a similar
constrained mean curvature type flow of capillary hypersurfaces in the
Euclidean unit ball. More specifically, if the initial surface is bounded by
two spherical caps that remains stationary under a specified constrained mean
curvature flow, the flow will remain bounded by the same two spherical caps.
Therefore, a certain pair of spherical caps can be used as {\tmem{barriers}}
of their designed constrained mean curvature flow. A similar property holds in
the case of geodesic ball in space forms, see {\cite{mei2023constrained}}.

According to the discussion in Remark \ref{capstatic}, the umbilical caps
defined by \eqref{capbarrier} can be regarded as barriers of the flow
\eqref{flow}. Then we have a similar corresponding $C^0$-estimate.

\begin{proposition}
  \label{C0}Let $x_0 (M)$ be an initial star-shaped hypersurface with respect
  to $c E_{n + 1}$ which satisfies that
  \[ x_0 (M) \subset \mathcal{C}_{c, R_1, \theta} \backslash\mathcal{C}_{c,
     R_2, \theta} \]
  for some $R_1 > R_2 > 0$, where $\mathcal{C}_{c, R, \theta}$ is defined in
  \eqref{capbarrier}. Then it holds that
  \[ x (\nobracket \nobracket M, t) \subset \mathcal{C}_{c, R_1, \theta}
     \backslash\mathcal{C}_{c, R_2, \theta} \]
  \begin{flushleft}
    for all $t$ along the flow \eqref{flow}. In particular, if $u (x, t)$
    solves the initial boundary value problem \eqref{scalflow} on $[0,\infty)$, then for any
    $T > 0$,
  \end{flushleft}
  \[ \| u \|_{C^0 (\mathbb{S}^{n }_+ \times [0, T])} \leq C, \]
  where $C = C (u_0, \nabla u_0, \nabla^2 u_0)$.
\end{proposition}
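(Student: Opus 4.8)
The plan is to run the avoidance/comparison principle for the flow \eqref{flow}, using the $\theta$-umbilical caps $\mathcal{C}_{c,R_1,\theta}$ and $\mathcal{C}_{c,R_2,\theta}$ as two‑sided barriers. By Remark~\ref{capstatic} each such cap is a stationary solution of \eqref{flow} — in particular it meets $P$ at the angle $\theta$ — and, since $cE_{n+1}$ lies in the interior of the Euclidean ball defining the cap and $K_0(c,R_i,\theta)>0$ makes the cap compact, the cap is a radial graph over $\bar{\mathbb S}^n_+$ with a continuous positive radial function $\rho_{R_i}$ (for instance $\rho_{R_i}\equiv R_i|\sin\theta|$ along the equator). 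Hence $u_{R_i}\assign\log\rho_{R_i}$ is a time‑independent solution of the scalar problem \eqref{scalflow}: $Q_c(\nabla^2 u_{R_i},\nabla u_{R_i},\rho_{R_i},\beta,\gamma)=0$ in $\mathbb S^n_+$ and $u_{R_i,\beta}=\sqrt{1+|\nabla u_{R_i}|^2}\cos\theta$ on $\partial\mathbb S^n_+$. The two caps are nested, since $|\mathrm{ctr}_1-\mathrm{ctr}_2|+R_2=|R_1-R_2||\cos\theta|+R_2\le R_1$, so the hypothesis $x_0(M)\subset\mathcal{C}_{c,R_1,\theta}\setminus\mathcal{C}_{c,R_2,\theta}$ says exactly $u_{R_2}\le u_0\le u_{R_1}$ on $\bar{\mathbb S}^n_+$, and I want to show this two‑sided bound is preserved for all $t$.

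First I would set $w=u-u_{R_1}$ on $\mathbb S^n_+\times[0,T]$. Interpolating $Q_c$ in $(\nabla^2 u,\nabla u,\rho)$ between the data of $u$ and of $u_{R_1}$, and writing $\rho-\rho_{R_1}=\bigl(\int_0^1 e^{u_{R_1}+sw}\,ds\bigr)w$, shows that $w$ solves a linear equation $w_t=a^{ij}w_{ij}+b^iw_i+c\,w$; the matrix $(a^{ij})$ is positive definite by \eqref{Fij}, and because the given classical solution $u$ is smooth on the compact set $\bar{\mathbb S}^n_+\times[0,T]$ all coefficients are bounded and $(a^{ij})$ uniformly elliptic there, so after replacing $w$ by $e^{-\lambda t}w$ with $\lambda$ large we may assume the zeroth‑order coefficient is strictly negative. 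For the boundary term I use the algebraic reduction of the capillary condition: decomposing $\nabla u=u_\beta\partial_\beta+\nabla^T u$ along $\partial\mathbb S^n_+$, the condition \eqref{bdryscal} is equivalent to $u_\beta=\cot\theta\,\sqrt{1+|\nabla^T u|^2}$ (and to $u_\beta=0$ if $\theta=\pi/2$), and likewise for $u_{R_1}$; subtracting gives $w_\beta=\langle B,\nabla^T w\rangle$ on $\partial\mathbb S^n_+$ with $B=\cot\theta\,(\nabla^T u+\nabla^T u_{R_1})\big/\bigl(\sqrt{1+|\nabla^T u|^2}+\sqrt{1+|\nabla^T u_{R_1}|^2}\bigr)$, so $|B|\le|\cot\theta|$. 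If $w$ attained a positive maximum over $\bar{\mathbb S}^n_+\times[0,T]$ at $(z_0,t_0)$ with $t_0>0$: if $z_0$ is interior this contradicts the parabolic maximum principle, while if $z_0\in\partial\mathbb S^n_+$ then $\nabla^T w(z_0,t_0)=0$ forces $w_\beta(z_0,t_0)=0$, contradicting Hopf's lemma for the oblique‑derivative problem. Hence $w\le\max\{0,\max_{\bar{\mathbb S}^n_+}w(\cdot,0)\}=0$, i.e. $u\le u_{R_1}$; the same argument applied to $u_{R_2}-u$ gives $u\ge u_{R_2}$.

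This gives $u_{R_2}\le u(\cdot,t)\le u_{R_1}$ on $\mathbb S^n_+\times[0,T]$, equivalently $x(M,t)\subset\mathcal{C}_{c,R_1,\theta}\setminus\mathcal{C}_{c,R_2,\theta}$ for all $t$; and since $\rho_{R_1},\rho_{R_2}$ are continuous and positive on the compact set $\bar{\mathbb S}^n_+$, it follows that $\|u\|_{C^0(\mathbb S^n_+\times[0,T])}\le C$ with $C$ depending only on $c,\theta,R_1,R_2$, which in turn depend only on $\Sigma_0$ (any compact star‑shaped $\Sigma_0$ has $\rho_0$ trapped between two positive constants, which fixes admissible $R_1,R_2$). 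The hard part is the capillary boundary condition in the comparison step: one is forced to apply the maximum/Hopf principle to an oblique‑derivative problem at a stage where no $C^1$ bound on $u$ is yet available. What makes this work is precisely the gradient‑free bound $|B|\le|\cot\theta|$ coming from the algebraic reduction of the capillary condition; everything else is the standard avoidance principle, with the stationary caps of Remark~\ref{capstatic} playing the role of the barriers, exactly in the spirit of \cite{wang2020mean} and \cite{mei2024constrained}.
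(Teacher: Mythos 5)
Your proposal is correct and follows essentially the same route as the paper: the static umbilical caps of Remark~\ref{capstatic} serve as barriers, the difference of radial functions satisfies a linear uniformly parabolic equation by interpolation, and the parabolic maximum principle together with Hopf's lemma at $\partial\mathbb{S}^n_+$ rules out a boundary extremum via the capillary condition \eqref{bdryscal}. Your only deviation is cosmetic: you solve \eqref{bdryscal} for $u_\beta=\cot\theta\sqrt{1+|\nabla^T u|^2}$ and subtract to get an oblique condition $w_\beta=\langle B,\nabla^T w\rangle$, whereas the paper argues via monotonicity of $x\mapsto x/\sqrt{1+k^2+x^2}$ — the same content.
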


\begin{proof}
  Let $\varphi$ be the defining logarithmic radial function of $C_{c, R_1,
  \theta}$. Then, $\varphi$ is a static solution of the equation of
  \eqref{scalflow}, we have
  \begin{eqnarray*}
    \partial_t (u - \varphi) & = & Q_c (\nabla^2 u, \nabla u, e^u, \beta,
    \gamma) - Q_c (\nabla^2 \varphi, \nabla \varphi, e^{\varphi}, \beta,
    \gamma)\\
    & = & A^{i j} \nabla_{i j} (u - \varphi) + b^j \cdot (u - \varphi)_j + B
    (u - \varphi),
  \end{eqnarray*}
  where
  \[ A^{i j} = \int_0^1 Q_c^{i j} (\nabla^2 (s u + (1 - s) \varphi), \nabla (s
     u + (1 - s) \varphi), e^{s u + (1 - s) \varphi}, \beta, \gamma) d s, \]
  
  \[ b_j = \int_0^1 Q_{c, p_j} (\nabla^2 (s u + (1 - s) \varphi), \nabla (s u
     + (1 - s) \varphi), e^{s u + (1 - s) \psi}, \beta, \gamma) d s \]
  and
  \[ B = \int_0^1 Q_{c, \rho} (\nabla^2 (s u + (1 - s) \varphi), \nabla (s u +
     (1 - s) \varphi), e^{s u + (1 - s) \psi}, \beta, \gamma) e^{s u + (1 - s)
     \psi} d s. \]
  Since $Q_{c, \rho}$ has no singular point if $\rho \neq 0$, $B$ is bounded
  and we can denote $\lambda = - \sup_{\mathbb{S}^n_+ \times [0, T]} | B |$.
  We get
  \[ \partial_t (e^{\lambda t} (u - \varphi)) = e^{\lambda t} (A^{i j}
     \nabla_{i j} (u - \varphi) + b^j \cdot (u - \varphi)_j + (B + \lambda) (u
     - \varphi)) . \]
  Let $(y_0, t_0)$ be the point where $e^{\lambda t} (u - \varphi)$ attains \
  its nonnegative maximum value. By maximum principle, $(y_0, t_0)$ can only
  be located on the parabolic boundary, say $(y_0, t_0)$. That is,
  \[ e^{\lambda t} (u (x, t) - \varphi (x)) \leq \sup_{\partial \mathbb{S}^n_+
     \times [0, T) \cup \mathbb{S}^n_+ \times \{ 0 \}} \{ 0, e^{\lambda t} (u
     (x, t) - \varphi (x)) \} . \]
  \ If $y_0 \in \partial \mathbb{S}^n_+,$ by Hopf Lemma we have
  \[ \nabla^{ \partial \mathbb{S}^n_+} (u - \varphi) (y_0, t_0) = 0 ;
     \quad \nabla_n (u - \varphi) (y_0, t_0) < 0, \]
  where $\nabla^{ \partial \mathbb{S}^n_+}$ denotes the gradient of a
  function on $\partial \mathbb{S}^n_+$, and $\nabla_n$ is the normal
  derivative on $\partial \mathbb{S}^n_+$. Then we have
  \[ | \nabla^{\partial \mathbb{S}^n_+} u (y_0, t_0) | = |
     \nabla^{\partial \mathbb{S}^n_+} \varphi (y_0, t_0) | \assign k,
     \infixand \nabla_n u (y_0, t_0) < \nabla_n \varphi (y_0, t_0) . \]
  \begin{flushleft}
    From the boundary condition in \eqref{scalflow},
    \[ \frac{\nabla_n u}{\sqrt{1 + k^2 + | \nabla_n u |^2}} = - \cos \theta =
       \frac{\nabla_n \varphi}{\sqrt{1 + k^2 + | \nabla_n \varphi |^2}}, \]
    which is a contradiction to $\nabla_n u (y_0, t_0) < \nabla_n \varphi
    (y_0, t_0)$ by monotonicity of the function $g (x) = x / \sqrt{1 + k^2 +
    x^2} .$
  \end{flushleft}
  
  Therefore we can only have $t_0 = 0$, that is,
  \[ e^{\lambda t} (u (y, t) - \varphi (y, t)) \leq u_0 (y_0) - \varphi (y_0)
     \leq 0, \quad \forall \; (y, t) \in \mathbb{S}^n_+ \times [0, T], \]
  which gives $u$ an upper bound by the umbilical cap $\mathcal{C}_{c, R_2,
  \theta}$'s defining function $\varphi$. Similarly, the desired lower bound
  can be obtained. We have finished the proof of Proposition \ref{C0}. 
\end{proof}

\section{$C^1$-estimate}\label{sec:c1}

In this section, we prove a $C^1$-estimate of the flow \eqref{scalflow}.
Inspired by {\cite{wang2020mean}}, we use the similar auxiliary function. Let
$d : \tmop{neigh} (\partial \mathbb{S}^n_+) \rightarrow \mathbb{R}$ be a
nonnegative smooth function on $\tmop{neigh} (\partial \mathbb{S}^n_+) \subset
\mathbb{S}^n_+$ defined by
\[ d (x) \assign \tmop{dist}_{\sigma} (x, \partial \mathbb{S}^n_+) . \]
Indeed, the function is well defined on a neighborhood of $\partial
\mathbb{S}^n_+$ but it can be extended to the whole $\mathbb{S}^n_+$ and
satisfies that
\[ d \geq 0, \hspace{1.2em} | \nabla d | \leq 1, \; \tmop{in} \hspace{1.2em}
   \bar{\mathbb{S}}^n_+ . \hspace{1.2em} \]
The following $C^1$-estimate is essential for the flow \eqref{flow}.

\begin{proposition}
  If $\theta$ satisfies that $| \cos \theta | < \frac{4 n K_0 (c, R, \theta) -
  c (n - 1)}{4 n K_0 (c, R, \theta) + c (n - 1)}$, for any $(x, t) \in
  \mathbb{S}^n_+ \times [0, T]$ $(T < T^{\ast})$, we have
  \[ | \nabla u | (x, t) \leq C, \]
  for some positive constant $C = C (\nabla^2 u_0, \nabla u_0, u_0)$.
\end{proposition}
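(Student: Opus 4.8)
The plan is to follow the now-standard strategy for gradient estimates of star-shaped capillary flows (as in \cite{wang2020mean} and \cite{mei2024constrained}): construct an auxiliary function combining $|\nabla u|^2$ with a function of $u$ and a term linear in the distance function $d$ to the boundary, and apply the maximum principle to it. Concretely, I would set $v = \sqrt{1+|\nabla u|^2}$ and consider $\Phi = \log v + f(u) + \alpha\, d$ (or a variant such as $\Phi = \frac{1}{2}|\nabla u|^2 e^{\beta u} + \alpha d$), where $f$ and the constant $\alpha$ are to be chosen. The role of $f(u)$ is to absorb, via the $C^0$-bound from Proposition \ref{C0}, the zeroth-order and $\rho$-dependent terms that arise when differentiating $Q_c$; the role of $\alpha d$ is to control the boundary behavior so that the capillary condition \eqref{bdryscal} forces the maximum of $\Phi$ into the interior rather than onto $\partial\mathbb S^n_+$.

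First I would treat the boundary analysis. At a would-be boundary maximum of $\Phi$, Hopf's lemma gives $\nabla_n \Phi \le 0$; differentiating the capillary relation $u_\beta = \sqrt{1+|\nabla u|^2}\cos\theta$ tangentially along $\partial\mathbb S^n_+$ and using the geometry of $\partial\mathbb S^n_+ \subset \mathbb S^n_+$ (its second fundamental form, which here is that of a totally geodesic equator, hence vanishes or is explicitly computable) one expresses $\nabla_n |\nabla u|^2$ at that point in terms of $|\nabla u|^2$ and $\cos\theta$. This is exactly where the angle condition \eqref{anglecond} enters: to push through $\nabla_n\Phi > 0$ and get a contradiction, one needs $\alpha$ chosen large relative to the quantities appearing, but $\alpha$ in turn is constrained by the interior computation, and the compatibility of these two requirements is precisely what \eqref{anglecond} (i.e. $|\cos\theta| < \frac{4nK_0 - c(n-1)}{4nK_0 + c(n-1)}$) encodes, with $K_0(c,R,\theta)$ measuring the Euclidean "height" $x_{n+1} \ge K_0$ available on the cap $\mathcal C_{c,R,\theta}$ from the $C^0$ confinement.

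Next I would carry out the interior computation: assume the maximum of $\Phi$ on $\mathbb S^n_+\times[0,T]$ is attained at an interior point $(x_0,t_0)$ with $t_0>0$ (the case $t_0=0$ gives the bound in terms of the initial data directly). There $\nabla\Phi = 0$ and $(\partial_t - Q_c^{ij}\nabla_{ij})\Phi \ge 0$. Using $\nabla\Phi=0$ to substitute for $\nabla v$, expanding $(\partial_t - Q_c^{ij}\nabla_{ij})\log v$ via the evolution equation \eqref{evolscal} and the expressions \eqref{Fij}--\eqref{Fgama} for the derivatives of $Q_c$, and commuting covariant derivatives on $\mathbb S^n_+$ (which produces curvature terms, i.e. a good $+|\nabla u|^2$-type term from the positive sectional curvature of the round sphere), one arrives at an inequality of the schematic form
\[
0 \le -\,\frac{c_1}{\rho e^w v}|\nabla^2 u|^2 + \big(\text{terms} \le C(1+|\nabla u|)\big) v + \alpha\,Q_c^{ij}\nabla_{ij}d - \alpha\,d_t,
\]
where the first term is the dominant negative term; choosing $f$ (the coefficient of the good curvature/gradient term) and then $\alpha$ appropriately, and invoking the $C^0$-bound so that $\rho, e^w$ and $\beta,\gamma$-dependent factors are bounded above and below, one concludes $|\nabla u|(x_0,t_0) \le C$ and hence $|\nabla u| \le C$ everywhere.

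The main obstacle is the bookkeeping in the boundary step: one must differentiate the nonlinear capillary condition along $\partial\mathbb S^n_+$, correctly account for the extrinsic geometry of $\partial\mathbb S^n_+$ inside $\mathbb S^n_+$ and the behavior of $d$ and its derivatives near the boundary, and verify that the resulting sign condition is implied by \eqref{anglecond} with the specific constant $\frac{4nK_0 - c(n-1)}{4nK_0 + c(n-1)}$ — in particular tracking how $K_0(c,R,\theta)$ (the lower bound on $x_{n+1}=e^{-w}$ coming from the barrier caps) combines with the $n c/\rho$ and mean-curvature terms of $q_c$. The interior estimate is essentially routine once the auxiliary function and constants are fixed, but the interplay that makes a single choice of $\alpha$ work simultaneously at the boundary and in the interior is the delicate point, and is the reason the admissible range of $\theta$ is strictly smaller here than in \cite{wang2020mean} and \cite{mei2023constrained}.
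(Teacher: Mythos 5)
Your overall framework (auxiliary function built from $v=\sqrt{1+|\nabla u|^2}$ plus a boundary-distance correction, Hopf lemma at the boundary, parabolic maximum principle in the interior) is the same family of argument as the paper's, but two of your key structural claims are wrong, and they are exactly the points where the proof actually lives. First, you locate the angle condition \eqref{anglecond} in the boundary step, as a compatibility condition between the boundary and interior choices of $\alpha$. In the paper the boundary case requires no such condition: the auxiliary function is $\Psi=(1+Kd)v+\cos\theta\,\sigma(\nabla u,\nabla d)$, and the extra term $\cos\theta\,\sigma(\nabla u,\nabla d)$ (the Wang--Weng device, absent from your $\log v+f(u)+\alpha d$) is what makes the boundary analysis close: differentiating the capillary condition \eqref{bdryscal} tangentially gives $u_{1i}=\cos^2\theta\,u_{1i}$, hence $u_{1i}=0$, and then Hopf's lemma yields $0\ge Kv+O(|\nabla u|)$, a contradiction for $K$ large, for any admissible $\theta$. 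Without that correction term your boundary step, which you yourself flag as the main obstacle, is left unresolved, and the condition \eqref{anglecond} is not what resolves it.

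Second, your interior inequality is not strong enough to conclude. You take the dominant negative term to be $-\frac{c_1}{\rho e^w v}|\nabla^2 u|^2$ and the good gradient term to come from the curvature of $\mathbb{S}^n_+$; a negative Hessian-squared term cannot by itself dominate terms of size $Cv^2$, and the commutation terms on the sphere only contribute bounded quantities here. The decisive negative term in the paper is quadratic in the gradient and comes from the structure of the speed $Q_c$ itself: the terms $\frac{nc}{\rho}\frac{|\nabla u|^2}{v}$ and $-\frac{nc\cos\theta}{\rho}(\cos\beta-\sin\beta u_\beta)$ combine (after the first-order conditions at the maximum are used to express $u_{11}$ and $u_{1\alpha}$ through $u_{\alpha\alpha}$) into $-nc(1-c_0)\rho^{-1}S|\nabla u|^2$, while absorbing the linear $\sum_\alpha u_{\alpha\alpha}$ terms into the quadratic Hessian term produces a competing positive term of size $\frac{c^2(n-1)(1+c_0)}{4(1-\varepsilon)\rho\,x_{n+1}}S|\nabla u|^2$. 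It is precisely here that the $C^0$ confinement $x_{n+1}=e^{-w}>K_0(c,R,\theta)$ and the hypothesis $|\cos\theta|<\frac{4nK_0-c(n-1)}{4nK_0+c(n-1)}$ are used, to make the total coefficient of $|\nabla u|^2$ strictly negative; the remaining terms are $O(v)$ and are then dominated. Since your sketch neither identifies this mechanism nor explains how a $|\nabla u|^2$-coefficient with the correct sign would arise from your ansatz, the proposal has a genuine gap at the heart of the estimate, even though the general strategy is the right one.
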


\begin{proof}
  Define an auxiliary function inspired by {\cite{wang2020mean}},
  \[ \Psi \assign (1 + K d) v + \cos \theta \sigma (\nabla u, \nabla d) \]
  where $K$ is a constant to be determined later. Let $(y_0, t_0) \in
  \bar{\mathbb{S}}^n_+ \times [0, t]$ be the point where $\Psi$ attains its
  maximum. We will discuss case by case to prove the theorem.
  
  {\tmstrong{Case 1}}: $(y_0, t_0) \in \partial \mathbb{S}^n_+ \times [0, T]$.
  At the point $x_0 \in \partial \mathbb{S}^n_+$, we choose a local coordinate
  $\{ y_1, \cdots, y_n \}$ near $y_0$, such that $\frac{\partial}{\partial
  y_1} = - \partial_{\beta}$ is an inner normal vector of $\partial
  \mathbb{S}^n_+$, and $\{ y_i \}_{i = 2}^{n + 1}$ be the geodesic coordinate
  near $y_0 \in \partial \mathbb{S}^n_+$ along $y_1 = t$, $(0 < t <
  \varepsilon)$ in the neighborhood of $x_0$. Under this coordinate,
  $\frac{\partial}{\partial y_1} = \nabla d$ on $\partial \mathbb{S}^n_+$.
  
  First of all, from the boundary condition of \eqref{scalflow}, we know that
  $u_1  \assign \nabla_{- \partial_{\beta}} u = - \cos \theta v$, denote
  $\nabla' u = \nabla u - u_1$, then from $1 + | \nabla' u |^2 = v^2 - u_1^2 =
  \sin \theta v^2$ we have,
  \[ \nobracket \Psi |_{\partial \mathbb{S}^n_+} = v - v \cos^2 \theta =
     \sqrt{1 + | \nabla' u |^2 + u_1^2} \sin^2 \theta = | \sin \theta |
     \sqrt{1 + | \nabla' u |^2} . \]
  Applying the Gauss-Weingarten equation, we have
 \begin{equation*}
  \begin{aligned}
    \nabla_1 v & = \frac{\nabla_k u \nabla_{k 1} u}{v} = \frac{\sum_{i = 2}^n \nabla_i u \nabla_{1 i} u}{v} - \cos \theta \nabla_{11} u\\
    & = \frac{1}{v} \sum_{i = 2}^n \left( u_i u_{i 1} + \sum_{j = 2}^n u_i h^{\partial \mathbb{S}^n_+}_{i j} u_j \right) - \cos \theta \nabla_{11} u\\
    & = \sum_{i = 2}^n \frac{u_i u_{i 1}}{v} - \cos \theta \nabla_{11} u,
  \end{aligned} 
\end{equation*}
  where we have used the fact that $h^{\partial \mathbb{S}^n_+}_{i j} = 0$
  since $\partial \mathbb{S}^n_+$ is totally geodesic in $\mathbb{S}^n_+$.
  Then, from Hopf Lemma we have,
\begin{equation}
  \begin{aligned}
    0 & \geq \nabla_1 \Psi (x_0) = \nabla_1 v + K v \nabla_1 d + \nabla_1 (u_k d_k) \cos \theta\\
    & = \nabla_1 v + K v \nabla_1 d + \nabla_{k 1} u d_k \cos \theta + u_k \nabla_{k 1} d \cos \theta\\
    & = \frac{1}{v} \sum_{i = 2}^n u_i u_{1 i} + K v + u_k \nabla_{k 1} d \cos \theta,
  \end{aligned} 
  \label{bdryderiY}
\end{equation}
  and for $i \geq 2$, we have
  \[ \nabla_i' \Psi (x_0) = v_i + u_{1 i} \cos \theta . \]
  Differentiating the boundary condition \eqref{scalflow}, together with the
  fact above we have
  \[ u_{1 i} = - \nabla_i' (\cos \theta v) = \cos^2 \theta u_{1 i}, \]
  therefore we know
  \begin{equation}
    u_{1 i} = 0, \quad \forall 1 \leq i \leq n - 1. \label{distinctu1i}
  \end{equation}
  Then applying \eqref{distinctu1i} to \eqref{bdryderiY}, we have
  \[ 0 \geq K v  + u_k \nabla_{k 1} d \cos \theta \geq \Psi \left(
     \frac{K}{\sin^2 \theta} - C_1 \right), \]
  where $C_1$ is a universal constant which satisfies $\frac{| | \nabla u |
  \cos \theta |}{v | \sin^2 \theta |} \leq C_1$. Then $K$ can be chosen large
  enough so that it comes to a contradiction.
  
  {\tmstrong{Case 2:}} If $(x_0, t_0) \in \bar{\mathbb{S}}^n_+ \times \{ 0
  \}$, then
  \[ \Psi (x, t) \leq \Psi (x_0, 0) = (1 + K d) \sqrt{1 + | \nabla u_0 |^2} +
     \cos \theta \sigma (\nabla  u_0, \nabla d) \leq C_2 . \]
  Therefore $v (x, t) \leq C,$ for any $(x, t) \in \bar{\mathbb{S}} \times [0,
  T]$.
  
  {\tmstrong{Case 3:}} In the case of $(x_0, t_0) \in \tmop{int}
  (\mathbb{S}^n_+)$, we have
\begin{equation}
  \begin{aligned}
    0  = &\nabla_i \Psi (x_0, t_0) = (1 + K d) v_i + K d_i v + \cos \theta (u_k d_k)_i\\
     = &\left( (1 + K d) \frac{\nabla_k u}{v} + \nabla_k d \cos \theta \right) \nabla_{i k} u + \nabla_k u \nabla_{i k} d \cos \theta + K d_i v.
  \end{aligned} 
  \label{intcriY}
\end{equation}
  Choose a geodesic coordinate, up to a rotation of the one in {\tmstrong{Case
  1}}, such that
  \[ u_1 = | \nabla u | > 0, \quad \tmop{and} \quad \{ \nabla_{\alpha \beta}
     u \}_{2 \leq \alpha, \beta \leq n} \, \tmop{is} \, \tmop{diagonal} . \]
  Assume $u_1 = | \nabla u |$ is large enough such that $u_1$, $v = \sqrt{1 +
  u_1^2}$ and $\Psi = (1 + K d) v + u_1 d_1 \cos \theta$ are equivalent.
  Otherwise, we could obtain the desire $C^1$ estimate for $u$. Here we denote
  $d_1 = \sigma (\nabla d, e_1) = u_1^{- 1} \sigma (\nabla d, \nabla u)$.
  
  Firstly, from \eqref{intcriY} by letting $i = \alpha$, we have
  \begin{equation}
    \left[ (1 + K d) \frac{u_1}{v} + \cos \theta d_1 \right] u_{1 \alpha} = -
    \cos \theta u_{\alpha \alpha} d_{\alpha} - \cos \theta u_1 d_{1 \alpha} -
    K d_{\alpha} v, \label{uij1}
  \end{equation}
  and letting $i = 1$, we have
  \begin{equation}
    \left[ (1 + K d) \frac{u_1}{v} + \cos \theta d_1 \right] u_{11} = - \cos
    \theta u_{\alpha 1} d_{\alpha} - \cos \theta u_1 d_{11} - K d_1 v,
    \label{uij2}
  \end{equation}
  We can see from the \eqref{uij1} that
  \begin{equation}
    u_{1 \alpha} = - S^{- 1} \cos \theta u_{\alpha \alpha} d_{\alpha} - S^{-
    1} (\cos \theta u_1 d_{1 \alpha} + K d_{\alpha} v), \label{uij3}
  \end{equation}
  where $S = (1 + K d) \frac{u_1}{v} + \cos \theta d_1$, and it is clear that
  $2 + \pi K \geq S \geq C (\delta, \theta)$, if there exists some positive
  $\delta > 0$ such that $u_1 (x_0, t_0) \geq \delta$. Indeed, if not, we
  would have already proved the theorem. \
  
  Inserting \eqref{uij3} into \eqref{uij2},
\begin{equation}
  \begin{aligned}
    u_{11}  = &- S^{- 1} \cos \theta u_{\alpha 1} d_{\alpha} - S ^{- 1} (\cos \theta u_1 d_{11} + K d_1 v)\\
     = &\frac{\cos^2 \theta}{S^2} \sum_{\alpha = 2}^n d_{\alpha}^2 u_{\alpha \alpha} + \left[ \sum_{\alpha = 2}^n \frac{\cos \theta d_{\alpha}}{S^2} (\cos \theta u_1 d_{1 \alpha} + K d_{\alpha} v) \right.\\
    & - S ^{- 1} (\cos \theta u_1 d_{11} + K d_1 v)\Bigg]\\
     = &\frac{\cos^2 \theta}{S^2} \sum_{\alpha = 2}^n d_{\alpha}^2 u_{\alpha \alpha} + O (v) .
  \end{aligned} 
  \label{u11}
\end{equation}
  Applying the condition of second derivative on $(x_0, t_0)$, we have
  \begin{equation}
  \begin{aligned}
    0 & \leq \left( \partial_t - Q_c^{i j} \nabla_{i j} {- Q_{c, p_i}} \nabla_i \right) \Phi\\
    & = \frac{(1 + K d)}{v} u_k (u_{k t} - Q_c^{i j} u_{k i j} - Q_{c, p_{\mathit{i}}} u_{k i})\\
    & \quad + d_k \cos \theta (u_{k t} - Q_c^{i j} u_{k i j} - Q_{c, p_i} u_{k i})\\
    & \quad + (1 + K d) \left( \frac{Q_c^{i j} u_l u_{l i} u_k u_{k j}}{v^3} - \frac{Q_c^{i j} u_{l i} u_{l j}}{v} \right)\\
    & \quad - (2 Q_c^{i j} u_{k i} d_{k j} \cos \theta + 2 K Q_c^{i j} d_i v_j)\\
    & \quad - (Q_c^{i j} u_k d_{k i j} \cos \theta + K Q_c^{i j} d_{i j} v)\\
    & \quad - Q_{c, \psi_i} (K d_i v + \cos \theta u_k d_{k i})\\
    & \assign L_1 + L_2 + L_3 + L_4 + L_5 + L_6 .
  \end{aligned} 
  \label{testY}
\end{equation}
  Let us examine the term $L_1$ and $L_2$ at first. Differentiating the
  parabolic equation \eqref{scalflow} with respect to $t$, we have
  \begin{equation}
    u_{t k} = Q_c^{i j} u_{i j k} + Q_{c, p_i} u_{i k} + Q_{c, \rho} \rho u_k
    + Q_{c, \beta} \sigma (\partial_{\beta}, e_k) + Q_{c, \gamma} \sigma
    (\partial_{\gamma}, e_k) . \label{diffut}
  \end{equation}
  Ricci identity on $\mathbb{S}^n_+$ gives that
  \begin{equation}
    u_{i j k} = u_{k i j} + u_j \sigma_{i l} - u_l \sigma_{i j} .
    \label{ricci}
  \end{equation}
  And by definition, we have
\begin{equation}
  \begin{aligned}
    a^{i j} u_{i j} & = \left( \sigma^{i j} - \frac{u^i u^j}{v^2} \right) u_{i j} = u_{11} + \sum_{\alpha = 2}^n u_{\alpha \alpha} - \frac{| \nabla u |^2}{v^2} u_{11}\\
    & = \frac{1}{v^2} u_{11} + \sum_{\alpha = 2}^n u_{\alpha \alpha} .
  \end{aligned} 
  \label{au}
\end{equation}
  Applying \eqref{diffut}, \eqref{ricci} and \eqref{au} to the terms $L_1$ and
  $L_2$ gives,
\begin{equation}
  \begin{aligned}
    L_1 & = \frac{1 + K d}{v} u_k (u_{k t} - Q^{i j} u_{k i j} - Q_{c, p_i} u_{k i})\\
    & = \frac{1 + K d}{v} u_k (Q_c^{i j} (u_j \sigma_{i k} - u_k \sigma_{i j}) + Q_{c, \rho} \rho u_k + Q_{c, \beta} \sigma (\partial_{\beta}, e_k)\\
    & \quad + Q_{c, \gamma} \sigma (\partial_{\gamma}, e_k))\\
    & = \frac{1 + K d}{v^4} \left( \cos \beta \cos \gamma u_{\beta} - \sin \beta \sin \gamma u_{\gamma} - c \frac{| \nabla u |^2}{\rho} \right) u_{11}\\
    & \quad - c \frac{1 + K d}{\rho  v^2} | \nabla u |^2 \left( \sum_{\alpha = 2}^n u_{\alpha \alpha} \right)\\
    & \quad - n c \frac{1 + K d}{v} | \nabla u |^2 \left( \frac{| \nabla u |^2}{\rho v} + \frac{\cos \theta \sin \beta}{\rho} u_{\beta} \right)\\
    & \quad + (1 + K d) \{ \frac{1}{v^2} (\cos \beta \cos \gamma u_{\beta} - \sin \beta \sin \gamma u_{\gamma}) \sum_{\alpha = 2}^n u_{\alpha \alpha}\\
    & \quad + \frac{n \cos \theta \cos \beta}{v \rho } | \nabla u |^2 + \frac{n}{v^2} (\sin \beta \cos \gamma u_{\beta }^2 + 2 \cos \beta \sin \gamma u_{\gamma}  u_{\beta}\\
    & \quad + \sin \beta \cos \gamma u^2_{\gamma}) + \frac{(1 - n) (1 + K d) | \nabla u |^2}{\rho e^w v^2} \}\\
    & \assign L_{11} + L_{12} + L_{13} + L_{14} .
  \end{aligned}\label{J1}
\end{equation}
  It is obvious that $L_{14} = O (v^{- 1}) \sum_{\alpha = 2}^n u_{\alpha
  \alpha} + O (v)$. And using \eqref{u11} we have
\begin{equation}
  \begin{aligned}
    L_{11} & = \frac{1 + K d}{v^4} \left( \cos \beta \cos \gamma u_{\beta} - \sin \beta \sin \gamma u_{\gamma} - c \frac{| \nabla u |^2}{\rho} \right)\left( \frac{\cos^2 \theta}{S^2} \sum_{\alpha = 2}^n d_{\alpha}^2 u_{\alpha \alpha} + O (u_1) \right)\\
    & = O (v^{- 2}) \sum_{\alpha = 2}^n | u_{\alpha \alpha} | + O (v^{- 1}) .
  \end{aligned}
\end{equation}
  Similarly, the term $L_2$ be written as follows,
\begin{equation}
  \begin{aligned}
    L_2 & = d_k \cos \theta (u_{k t} - Q_c^{i j} u_{k i j} - Q_{c, p_i} u_{k i})\\
    & = \frac{\cos \theta}{v^3} \left( \cos \beta \cos \gamma d_{\beta} - \sin \beta \sin \gamma d_{\gamma} - c \frac{\sigma (\nabla u, \nabla d)}{\rho} \right) u_{11}\\
    & \quad - c \frac{\cos \theta}{\rho v } \sigma (\nabla u, \nabla d) \sum_{\alpha = 2}^n u_{\alpha \alpha}\\
    & \quad - n c \sigma (\nabla u, \nabla d) \left( \frac{| \nabla u |^2}{\rho v} + \frac{\cos \theta \sin \beta}{\rho } u_{\beta} \right)\\
    & \quad + \cos \theta \left[ \frac{1}{v} (\cos \beta \cos \gamma d_{\beta} - \sin \beta \sin \gamma d_{\gamma}) \sum_{\alpha = 2}^n u_{\alpha \alpha} \right.\\
    & \quad \quad + \frac{n \cos \theta \cos \beta}{\rho } \sigma (\nabla u, \nabla d) + \frac{n}{v} (\sin \beta \cos \gamma u_{\beta} + \cos \beta \sin \gamma u_{\gamma}) d_{\beta}\\
    & \quad \quad + \frac{n}{v} (\sin \beta \cos \gamma u_{\beta} + \cos \beta \sin \gamma u_{\gamma}) d_{\alpha} + (1 - n) \frac{1}{{\rho e^w} } (\nabla u, \nabla d) \nobracket]\\
    & \assign L_{21} + L_{22} + L_{23} + L_{24}.
  \end{aligned} 
  \label{L2}
\end{equation}
  And for the same reason, we have $L_{21} = O (v^{- 2}) \sum_{\alpha = 2}^n |
  u_{\alpha \alpha} | + O (v^{- 1})$ and $L_{24} = O \left( \frac{1}{v}
  \right) \sum_{\alpha = 2}^n u_{\alpha \alpha} + O (v)$.
  
  For the term $L_3$, we have
\begin{equation}
  \begin{aligned}
    L_3 & = (1 + K d) \left( \frac{Q_c^{i j} u_l u_{l i} u_k u_{k j}}{v^3} - \frac{Q_c^{i j} u_{l i} u_{l j}}{v} \right)\\
    & = \frac{1 + K d}{\rho v e^w} \left( \sigma^{i j} - \frac{u_i u_j}{v^2} \right) \left( \frac{u_l u_{l i} u_k u_{k j}}{v^3} - \frac{u_{l i} u_{l j}}{v} \right)\\
    & = \frac{1 + K d}{\rho v e^w} \left( - \frac{1}{v^5} u_{11}^2 - \frac{2}{v^3} \sum_{\alpha = 2}^n u_{1 \alpha}^2 - \frac{1}{v} \sum_{\alpha = 2}^n u_{\alpha \alpha}^2 \right)\\
    & = \frac{1 + K d}{\rho v e^w} \left( - \frac{1}{v^5} u_{11}^2 - \frac{2}{v^3} \sum_{\alpha = 2}^n u_{1 \alpha}^2 \right) - (1 - \varepsilon) \frac{1 + K d}{\rho v^2 e^w} \sum_{\alpha = 2}^n u_{\alpha \alpha}^2\\
    & \quad - \varepsilon \frac{1 + K d}{\rho v^2 e^w} \sum_{\alpha = 2}^n u_{\alpha \alpha}^2\\
    & \assign L_{31} + L_{32} + L_{33}.
  \end{aligned} 
  \label{J3}
\end{equation}
  Then we compile the following three terms
\begin{equation}
  \begin{aligned}
    L_{12} + L_{22} + L_{32} & = - \frac{c}{\rho  v} \left( \frac{1 + K d}{v} | \nabla u |^2 + \cos \theta \sigma (\nabla u, \nabla d) \right) \sum_{\alpha = 2}^n u_{\alpha \alpha}\\
    & \quad - (1 - \varepsilon) \frac{1 + K d}{\rho v^2 e^w} \sum_{\alpha = 2}^n u_{\alpha \alpha}^{2 }\\
    & = - \frac{c}{\rho  v} S | \nabla u |  \sum_{\alpha = 2}^n u_{\alpha \alpha} - (1 - \varepsilon) \frac{1 + K d}{\rho v^2 e^w} \sum_{\alpha = 2}^n u_{\alpha \alpha}^2\\
    & \leq \frac{c^2 (n - 1) e^w S^2 | \nabla u |^2}{4 \rho  (1 - \varepsilon) (1 + K d)}\\
    & \leq \frac{c^2 (n - 1) (1 + | \cos \theta |) S e^w}{4 \rho (1 - \varepsilon) } | \nabla u  |^2 .
  \end{aligned} 
  \label{est1}
\end{equation}
  To estimate $L_{13} + L_{23}$, we need the following arguments. Let $c_0$ be
  a constant which satisfies $c_0 \in \left( | \cos \theta |, \frac{4 n \tau
  K_0 (c, R, \theta) - c (n - 1)}{4 n \tau K_0 (c, R, \theta) + c (n - 1)}
  \right)$, for some $0 < \tau < 1$. Then we can assume that
  \begin{equation}
    \frac{| \nabla u |^2}{v} + \cos \theta \sin \beta u_{\beta} \geq (1 - c_0)
    | \nabla u | . \label{est2}
  \end{equation}
  Otherwise,
  \begin{eqnarray*}
    1 - c_0 & > & \frac{| \nabla u | }{v} + \cos \theta \sin \beta
    \frac{u_{\beta}}{| \nabla u |}\\
    & \geq & \frac{| \nabla u |}{v} - | \cos \theta |
  \end{eqnarray*}
  would give an upper bound for $| \nabla u |$, the required estimate is
  obtained.
  
  Considering $L_{13} + L_{23}$ and using \eqref{est2}, we have
\begin{equation}
  \begin{aligned}
    L_{13} + L_{23} & = - n c \frac{1 + K d}{v} | \nabla u |^2 \left( \frac{| \nabla u |^2}{\rho v} + \frac{\cos \theta \sin \beta u_{\beta}}{\rho} \right)\\
    & \quad - n c \cos \theta \sigma (\nabla u, \nabla d) \left( \frac{| \nabla u |^2}{\rho v} + \frac{\cos \theta \sin \beta u_{\beta}}{\rho } \right)\\
    & = - n c | \nabla u | \rho^{- 1} \left( \frac{| \nabla u |^2}{v } + \cos \theta \sin \beta u_{\beta} \right) S\\
    & \leq - n c (1 - c_0) \rho^{- 1} S | \nabla u |^2 .
  \end{aligned} 
  \label{est3}
\end{equation}
  Combining \eqref{est1} and \eqref{est3}, we have
\[
\begin{aligned}
   & L_{13} + L_{23} + L_{12} + L_{22} + L_{32} \\
\leq & - n c (1 - c_0) \rho^{-1} S | \nabla u |^2 + \frac{c^2 (n - 1) (1 + | \cos \theta |) S}{4 (1 - \varepsilon) \rho } e^w | \nabla u |^2 \\
\leq & \left( - n c (1 - c_0) \rho^{-1} + \frac{c^2 (n - 1) (1 + c_0)}{4 (1 - \varepsilon) \rho  x_{n + 1}} \right) S | \nabla u |^2 .
\end{aligned}
\]
  For the last inequality, Proposition \ref{C0} shows that $\Sigma_t \subset
  \hat{\mathcal{C}}_{c, R, \theta}$, and hence $e^{- w} = x_{n + 1} > K_0 (c,
  R, \theta)$. Therefore
 \[
\begin{aligned}
   & - n c (1 - c_0) + \frac{c^2 (n - 1) (1 + c_0)}{4 (1 - \varepsilon) x_{n + 1}} \\
\leq & - \frac{2 c^2 n (n - 1)}{4 \tau c n K_0 (c, R, \theta) + (n - 1)} + \frac{2 c^2 \tau n K_0 (c, R, \theta)  (n - 1)}{x_{n + 1} (1 - \varepsilon) (4 \tau c n K_0 (c, R, \theta) + (n - 1))} \\
\leq & - \frac{2 c^2 n (n - 1)}{4 \tau c n K_0 (c, R, \theta) + (n - 1)} \left( 1 - \frac{\tau}{1 - \varepsilon} \right) .
\end{aligned}
\]
  Let $\varepsilon = (1 - \tau) / 2 \in (0, 1)$, from the $C^0$-estimate in
  Proposition \ref{C0}, there is an uniform lower bound $\rho_0$ on $\rho$.
  Hence letting $a_0 = \frac{2 c^2 n (n - 1) (1 - \tau)}{(4 \tau c n K_0 (c,
  R, \theta) + (n - 1)) (1 + \tau)} \rho_0 S$, we have
  \begin{equation}
    L_{13} + L_{23} + L_{12} + L_{22} + L_{32} < - a_0 | \nabla u |^2 .
    \label{est4}
  \end{equation}
  Now we consider $L_4$ and $L_6$. Using \eqref{u11}, \eqref{testY} and
  \eqref{au}, we obtain
\[
\begin{aligned}
   & L_4 + L_6 \\
= & - (2 Q_c^{i j} u_{k i} d_{k j} \cos \theta \nobracket \nobracket + 2 K Q_c^{i j} d_i v_j) - Q_{c, \psi_i} (K d_i v + \cos \theta u_k d_{k i}) \\
= & O \left( \frac{1}{v} \right) \sum_{\alpha = 2}^n | u_{\alpha \alpha} | + O (1),
\end{aligned}
\]

  and it is easy to see $L_5 = - (Q_c^{i j} u_k d_{k i j} \cos \theta + K
  Q_c^{i j} d_{i j} v) = O (1)$.
  
  Then adding all the terms back to \eqref{testY}, we have
 \[
\begin{aligned}
   0 & \leq \frac{1 + K d}{\rho e^w v} \left( - \frac{1}{v^5} u^2_{11} - \frac{2}{v^3} \sum_{\alpha = 2}^n | u_{1 \alpha} | \right) - \varepsilon_0 \frac{1 + K d}{2 \rho e^w v^2} \sum_{\alpha = 2}^n | u_{\alpha \alpha} | - \alpha_0 u_1^2 \\
   & \quad + O \left( \frac{1}{v} \right) \sum_{\alpha = 2}^n | u_{\alpha \alpha} |  + O (v) \\
   & \leq - \varepsilon_0 \frac{1 + K d}{2 \rho e^w v^2} \sum_{\alpha = 2}^n u^2_{\alpha \alpha} + \frac{C_2}{v} \sum_{\alpha = 2}^n | u_{\alpha \alpha} | - a_0 u_1^2 + C_1 v \\
   & \leq - a_0 u_1^2 + C_1 v + \frac{C^2_2 \rho e^w}{2 \varepsilon_0 (1 + K d)},
\end{aligned}
\]
  This gives an upper bound for $u_1$.
\end{proof}

\section{Convergence of the flow}\label{sec:convergence}

The higher order a-priori estimates of $u$ follow from the uniform $C^0$ and
$C^1$ estimates. The same argument as in {\cite{wang2020mean}} gives the
following result.

\begin{proposition}
  \label{Ck}If $u (\cdummy, t)$ solves the boundary value problem
  \eqref{scalflow} on the interval $[0, T^{\ast})$, and the initial
  hypersurfaces $\Sigma_0$ and the contact angle $\theta$ satisfies the same
  condition in Theorem \ref{thm}. Then for any $0 < T < T^{\ast}$, we have
  \[ \| u (\cdummy, t) \|_{C^k} \leq C, \quad 0 < t < T, \]
  where $C = C (k, u_0, \nabla u_0, \nabla^2 u_0) > 0$. In addition, it
  follows that $T^{\ast} = \infty$. 
\end{proposition}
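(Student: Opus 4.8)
The plan is to promote the uniform $C^0$ and $C^1$ bounds already established (Proposition \ref{C0} and the gradient estimate of Section \ref{sec:c1}, both available under the hypotheses of Theorem \ref{thm} and both with constants independent of $T$) to uniform $C^k$ bounds for every $k$ via the standard bootstrap for quasilinear parabolic equations with oblique boundary data, and then to use the $T$-independence of all these bounds together with short-time existence to rule out finite-time breakdown.

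First I would check that \eqref{scalflow} is uniformly parabolic with an oblique boundary condition on $\mathbb{S}^n_+ \times [0,T]$. The symbol matrix $Q_c^{ij} = \frac{1}{\rho v e^w}\left(\sigma^{ij} - \frac{u^i u^j}{v^2}\right)$ has eigenvalues $\frac{1}{\rho v e^w}$ and $\frac{1}{\rho v^3 e^w}$; since $\rho = e^u$ and $e^w = x_{n+1}$ are bounded above and below away from zero --- the lower bound $x_{n+1} \ge K_0(c,R,\theta) > 0$ coming from the trapping of $\Sigma_t$ in the compact cap in Proposition \ref{C0}, and the bounds on $\rho$ and $v$ from the $C^0$ and $C^1$ estimates --- these eigenvalues are pinched between two positive constants. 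The capillary condition, written $G(\nabla u) := u_\beta - v\cos\theta = 0$, is oblique because $|\cos\theta| < 1$ makes the coefficient vector $\partial_\beta - \cos\theta\, \nabla u / v$ have nonzero component along the inner conormal $-\partial_\beta$, and its $C^1$ norm is controlled by the gradient bound.

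The regularity bootstrap then proceeds as in \cite{wang2020mean}. Differentiating \eqref{scalflow} tangentially gives a linear uniformly parabolic equation for each $u_k$ with bounded measurable coefficients and oblique boundary data, so the Krylov--Safonov / De Giorgi--Nash--Moser estimate up to the boundary yields $\|u\|_{C^{1,\alpha}(\mathbb{S}^n_+\times[0,T])} \le C$ for some $\alpha \in (0,1)$, with $C$ depending on the data only through the $C^0$ and $C^1$ bounds. Once $u \in C^{1,\alpha}$, the coefficients $Q_c^{ij}$, $Q_{c,\psi_i}$, the inhomogeneity, and the boundary operator are H\"older continuous, and linear parabolic Schauder theory for oblique boundary value problems (e.g.\ Lieberman's theory of second order parabolic equations) upgrades this to $u \in C^{2,\alpha}$; iterating, with the regularity gained at each stage feeding back into the coefficients, produces $\|u(\cdot,t)\|_{C^k} \le C(k, u_0, \nabla u_0, \nabla^2 u_0)$ for every $k$, the smoothness of $u_0$ supplying whatever compatibility is needed for uniformity down to $t = 0$.

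For long-time existence, short-time existence furnishes a maximal time $T^* \in (0,\infty]$. If $T^* < \infty$, the bounds above hold on $[0,T^*)$ with constants that do not degenerate as $t \to T^*$ (they all descend from the $T$-independent bound of Proposition \ref{C0}), so $u(\cdot,t)$ converges in $C^\infty(\bar{\mathbb{S}}^n_+)$ to a smooth $u(\cdot,T^*)$; since $\|\nabla u\|_{C^0}$ stays bounded the limit is still a star-shaped capillary hypersurface (equivalently $\langle x - cE_{n+1}, \nu\rangle = \rho e^w / v$ is bounded below), so re-solving \eqref{scalflow} from $u(\cdot,T^*)$ extends the flow past $T^*$, contradicting maximality; hence $T^* = \infty$. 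I expect the only genuinely delicate step to be the boundary regularity --- the up-to-the-boundary H\"older gradient estimate and the subsequent Schauder estimates for the oblique (and at the first stage nonlinear) capillary condition, together with making these uniform down to $t=0$ from smoothness of $u_0$ alone; the interior estimates and the parabolicity check are routine.
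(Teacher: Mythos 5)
Your proposal is correct and follows essentially the same route the paper takes, which simply invokes the standard bootstrap of \cite{wang2020mean}: uniform parabolicity and obliqueness from the $C^0$/$C^1$ bounds, Krylov--Safonov plus parabolic Schauder theory for the oblique capillary condition to get all higher-order estimates, and a continuation argument for $T^\ast=\infty$. One small slip: in the half-space model $e^{-w}=x_{n+1}$, so $e^{w}=1/x_{n+1}$ rather than $x_{n+1}$, but the two-sided bound on $x_{n+1}$ from Proposition \ref{C0} still gives the uniform ellipticity constants you need.
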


Before we prove Theorem \ref{thm}, we need the following lemma.

\begin{lemma}
  \begin{flushleft}
    Let \label{capiso}$\mathcal{C}_{c_1, R_1, \theta } (a_1)$ and
    $\mathcal{C}_{c_2, R_2, \theta} (a_2)$ defined by \eqref{capbarrier}. If
    their enclosed volume are equal, that is, $| \hat{\mathcal{C}}_{c_1, R_1
    \comma \theta} (a_1) | = | \hat{\mathcal{C}}_{c_2, R_2 \comma \theta}
    (a_2) |$, it holds that
    \[ \frac{c_1}{R_1} = \frac{c_2}{R_2}, \]
    that is, the umbilical $\theta$-caps with the same enclosed volume have
    the same principal curvature.
  \end{flushleft}
\end{lemma}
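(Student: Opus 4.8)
The plan is to reduce everything to a monotonicity statement about a one-variable function, using the symmetries of $\mathbb{H}^{n+1}$ that fix the supporting plane $P=\{x_1=0\}$. Two families of such symmetries are relevant: the horizontal translations $x\mapsto x+b$ with $b$ perpendicular to both $E_1$ and $E_{n+1}$, and the Euclidean dilations $x\mapsto\lambda x$, $\lambda>0$. Both are isometries of the metric $x_{n+1}^{-2}\langle\cdot,\cdot\rangle_\delta$ — for the dilation, the pullback is $(\lambda x_{n+1})^{-2}\lambda^2\langle\cdot,\cdot\rangle_\delta=x_{n+1}^{-2}\langle\cdot,\cdot\rangle_\delta$ — and both preserve $P$ setwise. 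Reading off the defining formula \eqref{capbarrier}, the translation by $b$ sends $\mathcal C_{c,R,\theta}(a)$ to $\mathcal C_{c,R,\theta}(a+b)$ and the dilation by $\lambda$ sends it to $\mathcal C_{\lambda c,\lambda R,\theta}(\lambda a)$. Applying a translation to kill $a$ and then a dilation by $\lambda=1/R$, and using that isometries preserve hyperbolic volume, we obtain
\[ |\hat{\mathcal C}_{c,R,\theta}(a)| = |\hat{\mathcal C}_{c/R,1,\theta}(0)| =: V(c/R). \]
So the enclosed volume depends only on the ratio $\kappa=c/R$, which by Remark \ref{capstatic} is exactly the principal curvature of the cap; the lemma therefore reduces to showing that $V$ is injective.

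I would prove the stronger statement that $V$ is strictly decreasing. Fix $\kappa_1\le\kappa_2$, both above the compactness threshold (so $K_0>0$ for the corresponding caps). The spherical part of $\partial\mathcal C_{\kappa,1,\theta}(0)$ is the Euclidean sphere of radius $1$ centered at $\cos\theta\,E_1+\kappa E_{n+1}$ restricted to $x_1\ge0$; hence it, and with it the whole enclosed solid region, is covariant under vertical translation in the center's height, i.e. $\hat{\mathcal C}_{\kappa_2,1,\theta}(0)=\hat{\mathcal C}_{\kappa_1,1,\theta}(0)+(\kappa_2-\kappa_1)E_{n+1}$ as subsets of $\mathbb R^{n+1}_+$ (the plane $P$ being invariant under such a translation). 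Substituting $x=y+(\kappa_2-\kappa_1)E_{n+1}$ in the volume integral gives
\[ V(\kappa_2)=\int_{\hat{\mathcal C}_{\kappa_1,1,\theta}(0)}\frac{dy}{(y_{n+1}+\kappa_2-\kappa_1)^{n+1}} \le \int_{\hat{\mathcal C}_{\kappa_1,1,\theta}(0)}\frac{dy}{y_{n+1}^{n+1}}=V(\kappa_1), \]
with equality if and only if $\kappa_2=\kappa_1$, since the enclosed region has positive Lebesgue measure and the integrand is pointwise strictly smaller whenever $\kappa_2>\kappa_1$. Thus $V$ is strictly decreasing; in particular $|\hat{\mathcal C}_{c_1,R_1,\theta}(a_1)|=|\hat{\mathcal C}_{c_2,R_2,\theta}(a_2)|$ forces $V(c_1/R_1)=V(c_2/R_2)$ and hence $c_1/R_1=c_2/R_2$.

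The steps involving dilations and horizontal translations are routine once one checks they are isometries fixing $P$ and tracks their effect on \eqref{capbarrier}. The one point that deserves care — and which I regard as the only real (if modest) obstacle — is making precise what the enclosed domain $\hat{\mathcal C}$ is (the bounded region that the spherical cap cuts off from $P_+$) and verifying the translation-covariance identity $\hat{\mathcal C}_{\kappa_2,1,\theta}(0)=\hat{\mathcal C}_{\kappa_1,1,\theta}(0)+(\kappa_2-\kappa_1)E_{n+1}$; compactness of the caps, guaranteed by $K_0(c_i,R_i,\theta)>0$, ensures the relevant volumes are finite and that the upward translates stay inside $\mathbb R^{n+1}_+$. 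Granting that identity, the decisive monotonicity step is nothing more than a pointwise comparison of integrands.
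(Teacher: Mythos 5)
Your argument is correct and follows essentially the same route as the paper: both reduce the lemma, via a horizontal translation killing $a$ together with the Euclidean dilation $x \mapsto \lambda x$ (a hyperbolic isometry preserving $P$), to the injectivity of the enclosed hyperbolic volume as a function of a single parameter. The only difference is that you normalize to $R = 1$ and actually prove strict monotonicity of $V(\kappa)$ by the vertical-translation comparison of integrands, whereas the paper fixes $c = c_2$ and simply asserts that $| \hat{\mathcal{C}}_{c_2, R, \theta} |$ is strictly monotone in $R$ (which follows from nestedness of the caps, since the centers differ by $|R - R'| | \cos \theta | \leq R' - R$); so your proof supplies a detail the paper leaves implicit.
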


\begin{proof}
  Let $\chi = \chi_2 \circ \chi_1$ be an isometry composed by two isometries
  in $\mathbb{H}^{n + 1}$, where $\chi_1$ is a translation along the
  hyperbolic geodesic $\gamma_t = (0, \cdots, t) \in \mathbb{R}_+^{n + 1}$,
  such that,
  \[ \chi_1 (x_1, \cdots, x_{n + 1}) = \frac{c_2}{c_1} (x_1, \cdots, x_{n +
     1}), \]
  and $\chi_2$ is a translation defined by
  \[ \chi_2 (\tilde{x}, x_{n + 1}) = (\tilde{x} + a_2 - a_1, x_{n + 1}) . \]
  They are both isometric transformation. Hence
  \[ | \hat{\mathcal{C}}_{c_2, R_2 \comma \theta} (a_2) | = |
     \hat{\mathcal{C}}_{R_1, c_1, \theta} (a_1) | = | \chi
     (\hat{\mathcal{C}}_{R_1, c_1, \theta} (a_1)) | = |
     \hat{\mathcal{C}}_{c_2, c_2 R_1 / c_1, \theta} (a_2) | . \]
  Since $| \hat{\mathcal{C}}_{c_2, R, \theta} (\theta)  |$ is monotonically
  decreasing as $R$ decreasing, so $c_2 R_1 / c_1 = R_2$.
\end{proof}

\begin{remark}
  We can see from the proof that, the umbilical caps $\mathcal{C}_1$ and
  $\mathcal{C}_2$ on totally geodesic hyperplane with the same enclosed volume
  is equivalent to the same area and the same wetting area, which is the area
  of the domain enclosed by $\partial \mathcal{C}_1$ and $\partial
  \mathcal{C}_2$.
\end{remark}

Now we are ready to prove Theorem \ref{thm}.

\begin{proof}[Proof of the Theorem \ref{thm}]
  Let $r = 2$ in the Minkowski type formula \eqref{Mink1}, we have
  \[ \int_{\Sigma_t} \left[ H_1 \left( \frac{c}{x_{n + 1}} - c \cos \theta
     \langle E_1, \nu \rangle \right) - H_2 \langle x - c E_{n + 1}, \nu
     \rangle \right] d A = 0. \]
  Let $\widehat{\partial \Sigma}$ be the domain enclosed by $\partial \Sigma$
  on $P$. The first variation formula of the energy $\mathcal{Q}
  (\hat{\Sigma}_t) = \frac{1}{n} [| \Sigma_t | - \cos \theta |
  \widehat{\partial \Sigma} |]$ gives that
\[
\begin{aligned}
    \frac{d}{d t} \mathcal{Q}(t) & = \int_{\Sigma_t} H_1 \left( \frac{c}{x_{n + 1}} - c \cos \theta \langle E_1, \nu \rangle - H_1 \langle x - c E_{n + 1}, \nu \rangle \right) d A \\
    & = \int_{\Sigma_t} H_1 \left( \frac{c}{x_{n + 1}} - c \cos \theta \langle E_1, \nu \rangle - H_1 \langle x - c E_{n + 1}, \nu \rangle \right) d A \\
    & \quad - \int_{\Sigma_t} \left[ H_1 \left( \frac{c}{x_{n + 1}} - c \cos \theta \langle E_1, \nu \rangle \right) - H_2 \langle x - c E_{n + 1}, \nu \rangle \right] d A \\
    & = \int_{\Sigma_t} (H_1^2 - H_2) \langle x - c E_{n + 1}, \nu \rangle d A \\
    & = - \frac{1}{n^2 (n - 1)} \sum_{i < j} \int_{\Sigma_t} (\kappa_j - \kappa_i)^2 \langle x - c E_{n + 1}, \nu \rangle d A \\
    & = - \frac{1}{n^2 (n - 1)} \sum_{i < j} \int_{\mathbb{S}^n_+} (\kappa_i(y) - \kappa_j(y))^2 dg(y),
\end{aligned}
\]
  where $d g (y) = \frac{\rho e^w}{v} d y$ and $\kappa_i$'s are the principle
  curvatures at $y \in \mathbb{S}_+^n$ identified as a point on $\Sigma_t$.
  Therefore, the energy $\mathcal{Q} (\Sigma) = \tmop{Area} (\Sigma) - \cos
  \theta \mathcal{W} (\partial \Sigma)$ is monotonically decreasing from
  Proposition \ref{C0}, we know that the energy is bounded from above and
  below. Then integrating both sides of the equation above on $[0, + \infty)$,
  we have
  \[ \sum_{i < j} \int_0^{\infty} \int_{\mathbb{S}^n_+} (\kappa_i (y, t) -
     \kappa_j (y, t))^2 d g (y) d t \leq C. \]
  From the uniform $C^k$ estimate in Proposition \ref{Ck}, we have
  \[ \lim_{t \rightarrow \infty} | \kappa_i - \kappa_j |^2 = 0. \]
  Therefore any convergent subsequence of $x (\cdummy, t)$ must converge to an
  umbilical cap $\mathcal{C}_{c, R_{\infty}, \theta} (a_{\infty})$ as $t
  \rightarrow \infty$. Hence from the boundary condition in \eqref{flow}, we
  know that they are given by
  \[ \mathcal{C}_{c', R', \theta} (a_{\infty}) = \{ x \in \mathbb{H}^{n + 1}
     : | x + R' \cos \theta E_1 - a_{\infty} - c' E_{n + 1} | = R'  \}, \]
  where $a_{\infty}$ is a constant vector perpendicular to both $E_1$ and
  $E_{n + 1}$.
  
  It remains to show that the limit umbilical cap is unique. We follow the
  proof in {\cite{mei2023constrained}} and {\cite{wang2023alexandrov}}. Let
  $R_{\infty} > 0$ be the unique number such that $| \hat{\Sigma} | = |
  \hat{\mathcal{C}}_{c, R_{\infty}, \theta} |$.
  
  Denote by $R (\nosymbol \cdummy, t)$ the radius of the unique umbilical cap
  \[ \mathcal{C}_{c, R (\cdummy, t), \theta} = \{ x \in \mathbb{H}^{n + 1} : |
     x - R (\cdummy, t) \cos \theta E_1 - c E_{n + 1}   | = R (\cdummy, t) \},
  \]
  passing through the point $x (\cdummy, t)$. Let $R^{\ast} (t) = \max_{x \in
  M} R (x, t)$ and there exists a point $\xi_t$ attaining $R^{\ast} (t)$ by
  compactness. It then follows that $R^{\ast} (t)$ is non-increasing since the
  $\mathcal{C}_{c, R^{\ast} (\cdummy, t), \theta}$ is a barrier of $x
  (\cdummy, t)$ by Proposition \ref{C0}. We claim that
  \begin{equation}
    \lim_{t \rightarrow \infty} R^{\ast} (t) = R_{\infty} . \label{limitrho}
  \end{equation}
  We suppose otherwise, then there exist $\varepsilon > 0$ and $t$ large
  enough, such that
  \begin{equation}
    R^{\ast} (t) > R_{\infty} + \varepsilon, \label{nonlimitrho}
  \end{equation}
  from the definition of $R (\cdummy, t)$, we have
  \[ | x + R (\cdummy, t) \cos \theta E_1 - c E_{n + 1} |_{\delta}^2 = R^2
     (\cdummy, t), \]
  or equivalently
  \[ | x |_{\delta}^2 + c^2 - 2 c \langle x, E_{n + 1} \rangle_{\delta} - 2 R
     (\cdummy, t) \cos \theta \langle x, E_1 \rangle_{\delta} = R^2 (\cdummy,
     t) \sin^2 \theta .  \]
  Taking the derivative of the equation above with respect to $t$, we have
  \begin{equation}
    \langle x_t, x - c E_{n + 1} + R (\cdummy, t) \cos \theta E_1
    \rangle_{\delta} = R_t (\cdummy, t) (R (\cdummy, t) \sin^2 \theta - \cos
    \theta \langle x, E_1 \rangle_{\delta}) . \label{capderit}
  \end{equation}
  Now we evaluate at the point $ (\xi_t, t)$, since $\Sigma_t$ is tangential
  to $\mathcal{C}_{c, \rho^{\ast} (\cdummy, t), \theta}$ at the point, then
  the normal vector at $(\xi_t, t)$ is
  \[ \tilde{\nu} (\xi_t, t) = \nu_{\mathcal{C}_{c, \rho^{\ast} (t), \theta}}
     (x (\xi_t, t)) = \frac{x + R^{\ast} (t) \cos \theta E_1 - c E_{n +
     1}}{R^{\ast} (t)} . \]

Inserting the flow equation \eqref{flow} to \eqref{capderit}, we have 

\begin{equation}
    \begin{aligned}
        & (R^{\ast}(t) \sin^2 \theta - \cos \theta \langle x, E_1 \rangle_{\delta}) \partial_t R^{\ast}(t) \\
        = & x_{n + 1} \frac{| x + R^{\ast}(t) \cos \theta E_1 - c E_{n + 1} |_{\delta}^2}{R^{\ast}(t)} q_c \\
         = &x_{n + 1} R^{\ast}(t) \left( \frac{n c}{x_{n + 1}} - n c \cos \theta \langle E_1, \nu \rangle - H (\xi_t, t) \langle x - c E_{n + 1}, \nu \rangle \right) .\label{est10}
    \end{aligned}
\end{equation}
  From the calculation in Remark \ref{capstatic}, we have
  \begin{equation}
    H_{\mathcal{C}_{c, \rho^{\ast} (t), \theta}} = \frac{n c}{R^{\ast} (t)},
    \label{H1}
  \end{equation}
  then
  \begin{equation}
    \left. \frac{\frac{n c}{x_{n + 1}} - n c \cos \theta \langle E_1, \nu
    \rangle}{\langle x - c E_{n + 1}, \nu \rangle} \right|_{(\xi_t, t)} =
    \left. \frac{\frac{n c}{x_{n + 1}} - n c \cos \theta \langle E_1, \nu
    \rangle}{\langle x - c E_{n + 1}, \nu \rangle} \right|_{\mathcal{C}_{c,
    \rho^{\ast} (t), \theta}} = \frac{n c}{R^{\ast} (t)} . \label{H2}
  \end{equation}
  \[ \  \]
  From \eqref{supp}, \eqref{H1} and \eqref{H2}, we can see that for
  $\varepsilon>0$ small enough, there exist a $T > 0$ such that for any $t > T$,
  the following
\begin{equation}
    \begin{aligned}
        & x_{n + 1} R^{\ast} (t) \left( \frac{n c}{x_{n + 1}} - n c \cos \theta \langle E_1, \nu \rangle - H (\xi_t, t) \langle x - c E_{n + 1}, \nu \rangle \right) \\
         = & x_{n + 1} (n c - H (\xi_t, t) R^{\ast} (t)) \langle x - c E_{n + 1}, \nu \rangle
    \end{aligned}
    \label{estrhs}
\end{equation}

  holds at $(\xi_t, t)$.
  
  On the other hand, since $x (\cdummy, t)$ converge to $\mathcal{C}_{c ,
  \rho_{\infty}, \theta} (a_{\infty})$ and $\rho_{\infty}$ is uniquely
  determined, we get
  \[ \lim_{t \rightarrow \infty} H (\xi_t, t) = \frac{n c'}{{R }'} . \]
  Therefore, for $\varepsilon_0 = \frac{n c'}{2 R'}$, there exists a $T > 0$,
  such that for any $t > T$,
  \begin{equation}
    H (\xi_t, t) > \frac{n c'}{R'} - \varepsilon_0 > 0. \label{estH}
  \end{equation}
  Then from \eqref{nonlimitrho}, \eqref{estrhs} and \eqref{estH}, we get
\begin{equation}
    \begin{aligned}
        &\;x_{n + 1} R^{\ast} (t) \left( \frac{n c}{x_{n + 1}} - n c \cos \theta \langle E_1, \nu \rangle - H (\xi_t, t) \langle x - c E_{n + 1}, \nu \rangle \right) \\
         \leq &\;\frac{1}{2} x_{n + 1} \left( n c - \left( \frac{n c'}{R'} - \varepsilon_0 \right) (R_{\infty} + \varepsilon) \right) \langle x - c E_{n + 1}, \nu \rangle \\
         \leq &\;\frac{1}{2} x_{n + 1} \left( - \varepsilon_0 R_{\infty} - \varepsilon \left( \frac{n c'}{R'} - \varepsilon_0 \right) \right) \langle x - c E_{n + 1}, \nu \rangle \\
         < &\;0
         \label{capderit2}
    \end{aligned}
\end{equation}
  In the second inequality we have used Proposition $\ref{Ck}$. On the other
  hand, since $0 \leq \langle x, E_1 \rangle_{\delta} \leq 1 - \rho^{\ast} 
  (t) \cos \theta$, we have
  \begin{equation}
    R^{\ast} (t) \sin^2 \theta - \cos \theta \langle x, E_1 \rangle \geq
    R^{\ast} (t) (1 - \cos \theta) > 0, \label{capderit3}
  \end{equation}
  then combining \eqref{est10}, \eqref{capderit2} and \eqref{capderit3}, we
  have
\[ \partial_t R^{\ast} (t) < 0. \]
  This leads to a contradiction to that $\lim_{t \rightarrow 0} \frac{d}{d t}
  R^{\ast} (t) = 0$. Hence \eqref{limitrho} holds. Similarly, we can show
  \[ \lim_{t \rightarrow \infty} R_{\ast} (t) = R_{\infty}, \]
  where $R_{\ast} (t)$ is defined by $R_{\ast} (t) = \min_{x \in M} R (x, t) =
  R (\chi_t, t)$ and $\chi_t$ is the point achieving $R_{\ast} (t)$. Therefore
  $\mathcal{C}_{c', R', \theta} (a_{\infty}) =\mathcal{C}_{c, R_{\infty},
  \theta}$, and we obtain the uniqueness of the limit. 
\end{proof}

\begin{remark}
  In Poincar{\'e} half space model of hyperbolic space, the volume of an
  umbilical cap is determined not only by its Euclidean radius but also by the
  location of its center, particularly the $(n + 1)$-th coordinate. As
  indicated by Remark \ref{capstatic}, the location of its center also
  influences the principal curvatures of the cap. Therefore, we cannot
  identity the radius by the volume of the domain bounded by initial
  hypersurfaces $\Sigma_0$.
  
  Note that all the isometries appearing in the proof of Lemma \ref{capiso} on
  Poincar$\acute{\mathe}$ half-space model will keep the ratio $c / R$ of an
  umbilical cap $\mathcal{C}_{c, R \comma \theta}$. \ 
\end{remark}

From the monotonicity of the energy $\mathcal{Q}$, we have the following
corollary.

\begin{corollary}
  \begin{flushleft}
    \tmcolor{red}{}Let $x : M \rightarrow \mathbb{H}^{n + 1}$ be a
    $\theta$-capillary hypersurface supported on the totally geodesic
    hyperplane $P$. If
    \begin{enumerate}
      \item $\Sigma = x (M)$ is contained in an umbilical cap $\mathcal{C}_{c,
      R, \theta} (a)$ which satisfies that $K (c, R, \theta) > c (n - 1) / 4
      n$.
      
      \item the contacting angle $\theta$ satisfies
      \[ | \cos \theta | < \frac{4 n K_0 (c, R, \theta) - c (n - 1)}{4 n K_0
         (c, R, \theta) + c (n - 1)}, \]
    \end{enumerate}
    then $\theta$-umbilical caps with the same enclosed volume with $\Sigma =
    x (M)$ are the only minimizers of the energy $\mathcal{Q}$. 
  \end{flushleft}
\end{corollary}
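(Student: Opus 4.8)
The plan is to deduce everything from the volume-preserving flow \eqref{flow}, the monotonicity of $\mathcal{Q}$ along it, and the convergence result of Theorem \ref{thm}. First I would recall the identity established in the proof of Theorem \ref{thm}: along the flow one has
\[
\frac{d}{dt}\,\mathcal{Q}(\Sigma_t) \;=\; -\,\frac{1}{n^2(n-1)}\sum_{i<j}\int_{\mathbb{S}^n_+}(\kappa_i-\kappa_j)^2\,dg \;\le\; 0,
\]
where $dg$ is the positive measure on $\mathbb{S}^n_+$ obtained by pulling back $\langle x-cE_{n+1},\nu\rangle\,dA$ (positive by star-shapedness), so that $\mathcal{Q}(\Sigma_t)$ is non-increasing and strictly decreasing unless $\Sigma_t$ is totally umbilical. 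By Theorem \ref{thm}, for any hypersurface $\Sigma$ satisfying the hypotheses of the corollary together with star-shapedness with respect to $cE_{n+1}$, the flow starting at $\Sigma$ exists for all time and converges in $C^\infty$ to a $\theta$-umbilical cap $\mathcal{C}_\infty$ with the same enclosed volume $V:=|\hat\Sigma|$; by $C^\infty$-convergence, $\mathcal{Q}(\Sigma_t)\to\mathcal{Q}(\mathcal{C}_\infty)$.

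Next I would use Lemma \ref{capiso} to fix a common energy value for the model caps. By that lemma all $\theta$-umbilical caps enclosing the volume $V$ have the same principal curvature $c/R$, and the isometries $\chi_1,\chi_2$ appearing in its proof (a hyperbolic dilation and a horizontal Euclidean translation, both preserving $P$ and the contact angle) carry any one of them onto any other; hence they all have the same area and the same wetting area, i.e. a common value $\mathcal{Q}^\ast(V):=\mathcal{Q}(\mathcal{C}_\infty)$. Integrating the displayed identity on $[0,\infty)$ gives
\[
\mathcal{Q}(\Sigma)-\mathcal{Q}^\ast(V)\;=\;\frac{1}{n^2(n-1)}\int_0^{\infty}\sum_{i<j}\int_{\mathbb{S}^n_+}(\kappa_i-\kappa_j)^2\,dg\,dt\;\ge\;0,
\]
so $\mathcal{Q}(\Sigma)\ge\mathcal{Q}^\ast(V)$ for every admissible $\Sigma$ of enclosed volume $V$. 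Since by Remark \ref{capstatic} the $\theta$-umbilical caps are stationary for the flow and realize the value $\mathcal{Q}^\ast(V)$ (and are themselves admissible, being radial graphs as in Proposition \ref{C0}), they are minimizers of $\mathcal{Q}$ at fixed enclosed volume.

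For the rigidity statement, suppose $\Sigma$ is admissible with $|\hat\Sigma|=V$ and $\mathcal{Q}(\Sigma)=\mathcal{Q}^\ast(V)$. Then the right-hand side of the last display vanishes, which forces $\kappa_i(\cdot,t)=\kappa_j(\cdot,t)$ for all $i,j$ and all $t\ge0$ since $dg>0$; in particular $\Sigma=\Sigma_0$ is totally umbilical. A compact totally umbilical $\theta$-capillary hypersurface supported on the totally geodesic plane $P$ is, as recalled in the introduction, part of a geodesic sphere, horosphere or equidistant hypersurface meeting $P$ at the angle $\theta$, i.e. a $\theta$-umbilical cap $\mathcal{C}_{c',R',\theta}(a')$; applying Lemma \ref{capiso} once more, its parameters satisfy $c'/R'=c/R_\infty$, where $R_\infty$ is the number determined by $V$. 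Hence $\Sigma$ is one of the $\theta$-umbilical caps enclosing volume $V$, and conversely such caps are minimizers, so they are the only minimizers.

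The main obstacle is conceptual rather than computational: the monotonicity and convergence machinery is available only for competitors to which Theorem \ref{thm} applies, so the precise statement proved this way is that the $\theta$-umbilical caps are the unique minimizers of $\mathcal{Q}$ among star-shaped $\theta$-capillary hypersurfaces of the same enclosed volume that satisfy the hypotheses of Theorem \ref{thm}. Within this class the argument above is complete; enlarging the comparison class, for instance dropping star-shapedness, would require a different method and is not asserted here.
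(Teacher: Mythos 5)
Your proposal is correct and follows exactly the route the paper intends: the paper offers no separate proof beyond the phrase ``from the monotonicity of the energy $\mathcal{Q}$,'' and your argument spells out that intended reasoning---monotonicity of $\mathcal{Q}$ along the volume-preserving flow, convergence from Theorem \ref{thm}, Lemma \ref{capiso} to give all caps of the given volume a common energy, and vanishing of the umbilicity deficit for rigidity. Your closing caveat that the comparison class must consist of star-shaped hypersurfaces satisfying the hypotheses of Theorem \ref{thm} is a fair and accurate reading of a restriction the corollary's statement leaves implicit.
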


\

\printbibliography

\end{document}